\newtheorem{theorem}{Theorem}[section]
\newtheorem{lemma}[theorem]{Lemma}
\newtheorem{definition}[theorem]{Definition}
\newtheorem{corollary}[theorem]{Corollary}
\newtheorem{remark}[theorem]{Remark}
\begin{document}
\pagestyle{plain}
\title{P\lowercase{hase field model for multi-material shape optimization of inextensible rods}}
\author[P.\,Dondl]{Patrick Dondl}
\author[A.\,Maione]{Alberto Maione}
\author[S.\,Wolff-Vorbeck]{Steve Wolff-Vorbeck}
\address[P.\, Dondl]{Contributing Author\newline\indent Abteilung f\"{u}r Angewandte Mathematik and \textit{liv}MatS Cluster of Excellence\newline\indent Albert-Ludwigs-Universit\"{a}t Freiburg \newline\indent Hermann-Herder-Strasse 10, 79104 Freiburg i. Br. -- Germany}
\email{patrick.dondl@mathematik.uni-freiburg.de}
\address[A.\, Maione]{Corresponding Author\newline\indent Centre de Recerca Matemàtica\newline\indent Edifici C, Campus Bellaterra, 08193 Bellaterra, Spain}
\email{amaione@crm.cat}
\address[S.\,Wolff-Vorbeck]{Contributing Author\newline\indent Abteilung f\"{u}r Angewandte Mathematik\newline\indent Albert-Ludwigs-
Universit\"{a}t Freiburg \newline\indent Hermann-Herder-Strasse 10, 79104 Freiburg i. Br. -- Germany}
\thanks{P. Dondl and A. Maione are supported by the DFG SPP 2256 project. 
P. Dondl and S. Wolff-Vorbeck acknowledge the support of the DFG SPP 1886 project (DFG Projects number 422730790).
A. Maione is also supported by the INdAM-GNAMPA projects ``Equazioni differenziali alle derivate parziali di tipo misto o dipendenti da campi di vettori'' (Project number CUP\_E53C22001930001) and ``Pairing e div-curl lemma: estensioni a campi debolmente derivabili e differenziazione non locale'' (Project number CUP\_E53C23001670001).}
\keywords{$\Gamma$-convergence, shape optimization, mathematical modeling, sharp interface, phase field problems, diffuse interface, optimality conditions, numerical simulations, steepest descent, plant morphology}
\subjclass[2020]{35Q74, 41A60, 49J45, 49Q10, 74B20, 74P10}
\begin{abstract}
We derive a model for the optimization of the bending and torsional rigidities of non-homogeneous elastic rods.
This is achieved by studying a sharp interface shape optimization problem with perimeter penalization, that treats both rigidities as objectives.
We then formulate a phase field approximation of the optimization problem and show the convergence to the aforementioned sharp interface model via $\Gamma$-convergence.
In the final part of this work we numerically approximate minimizers of the phase field problem by using a steepest descent approach and relate the resulting optimal shapes to the development of the morphology of plant stems.
\end{abstract}
\maketitle


\section{Introduction}


Tailoring resistance of a rod against bending and torsional deformations is a decisive factor in several fields of civil engineering and bioengineering, as well as in the development of plant morphology~\cite{ecsedi2005bounds,kim2000topology,niklas1992plant,Vogel1992,vogel2007living,wolff2022charting}.
In particular, in the construction of building components, it is necessary to optimize certain responses of rods subject to bending and torsional moments.  
A well-known way to achieve this is to optimize their bending and torsional rigidities \cite{kim2000topology,wolff2019twist}, which are appropriate measures of resistance to bending and torsional deformations.

A historical challenge is to find a rigorous mathematical formulation of these rigidities from a one-dimensional rod model, that can be seen as the limit of three-dimensional elastic energies, as the thickness of the rod tends to zero.
Classical methods in such derivation have a long history and are based on the so-called \textit{dimension reduction} method.
For a comprehensive introduction on the topic, we refer the interested reader to the monographs of Antman \cite{antman1973theory,antman2005problems} and to~\cite{kirchhoff1850gleichgewicht,villaggio1997mathematical}, for further discussions about the history of the subject.

In 2002, Mora and M\"uller \cite{mora2002derivation} were the first to rigorously answer this question.
They showed that the nonlinear bending-torsion theory can be obtained from the three-dimensional nonlinear elasticity, by studying the asymptotic behaviour (as $h$ tends to zero) of the sequence of energies
\begin{align}\label{energy0}
    \int\limits_{\Omega_h} W(\nabla v(x))\mathrm{d}x\,,\quad v \in W^{1,2}(\Omega_{h};\mathbb{R}^{3}),
\end{align}
by means of the De Giorgi and Franzoni $\Gamma$-convergence \cite{de1975tipo}.

 In this model, the rod is a three-dimensional set $\Omega_h=(0,L)\times hS$, where $L>0$, $S$ is the cross-section of the rod, that is a two-dimensional open, bounded and connected set with Lipschitz boundary, and $h\in\mathbb{R}^+$ is a small positive scaling parameter, that we refer to as the thickness of the rod.
In the subcase of isotropic materials, which is what we are concerned with below, the stored energy function $W \colon \mathbb{M}^{3\times 3} \rightarrow [0,+\infty]$ is required to satisfy the following standard assumptions:
\begin{itemize}
    \item [1.] $W \in \mathbf{C}^{0}(\mathbb{M}^{3\times 3})$ and $W$ is of class $\mathbf{C}^{2}$ in a neighbourhood of $SO(3)$;
    \item[2.] $W$ is \textit{frame-indifferent}, i.e. $W(A)$=$W(RA)$ for all $A \in \mathbb{M}^{3\times 3}$ and for all $R\in SO(3)$;
    \item[3.] $W(A)=0$ if $A\in SO(3)$ and there exists a constant $C>0$ such that
    \begin{align*}
    W(A)\geq C \operatorname{dist}^{2}(A,SO(3))\quad\text{for any }A\in\mathbb{M}^{3\times 3}\,;
    \end{align*}
    \item[4.] $W(A)=W(AR)$ for all $A \in \mathbb{M}^{3\times 3}$ and for all $R\in SO(3)$.
\end{itemize}
$\mathbb{M}^{3\times 3}$ and $SO(3)$ denote, respectively, the space of matrices of dimension $3$ and the group of all rotations about the origin of the three-dimensional Euclidean space $\mathbb{R}^{3}$ under the operation of composition.
As pointed out in \cite{acerbi1991variational}, the energies \eqref{energy0} scaled by $h^{2}$ correspond to stretching and shearing deformations, leading to a string theory.
In contrast, the energies \eqref{energy0} scaled by $h^{4}$ correspond to bending and torsional deformations, which leave the domain unextended and, thus, lead to a rod theory.

We emphasize that the authors of \cite{mora2002derivation} only took into account the case of inextensible \textit{homogeneous} rods, where the cross-sections $S$ contains only one single material.
This model has been later generalized by Neukamm~\cite{neukamm2011rigorous} to the case of \textit{non-homogeneous} rods, meaning that the stored energy function $W$ may also depend on the position of the point $x$ in $\Omega_h$.
Neukamm also takes into account the vertically and periodic distributed heterogeneities of the material, represented by a small parameter $\epsilon$, which leads $W=W_\epsilon(x,\nabla v(x))$ to depend also on $\epsilon$, and the nonlinear elastic energy to depend on the two scaling parameters $h$ and $\epsilon$.

The model presented in this paper is a special case on the one considered in \cite{neukamm2011rigorous}, which concerns non-oscillatory isotropic materials in the homogenized bending-torsion theory.
In what follows, we briefly describe the main ideas on how to derive the mathematical model.

 In the theories of pure bending and the Saint-Venant's theory of pure torsion for isotropic non-homogeneous rods, the bending rigidity and the torsional rigidity each depend on a single material constant (see e.g.~\cite{crandall1978introduction,ecsedi2005bounds,lekhnitskii1971torsion}).
These are, respectively, the space-dependent shear modulus $\mu(x_2,x_3)$ and the space-dependent Young's modulus $E(x_2,x_3)$, with $(x_2,x_3)\in S$.

In our model, we consider $\Omega_h$ fixed at $x_1=0$ and that the bending of $\Omega_h$ is due to bending moments $M_{x_2}$ and $M_{x_3}$, while the torsion to a torsional moment $T$ at $x_1=L$, see Fig.\ref{fig:1}.
\begin{figure}[H]
\centering
{\includegraphics[width=0.45\textwidth]{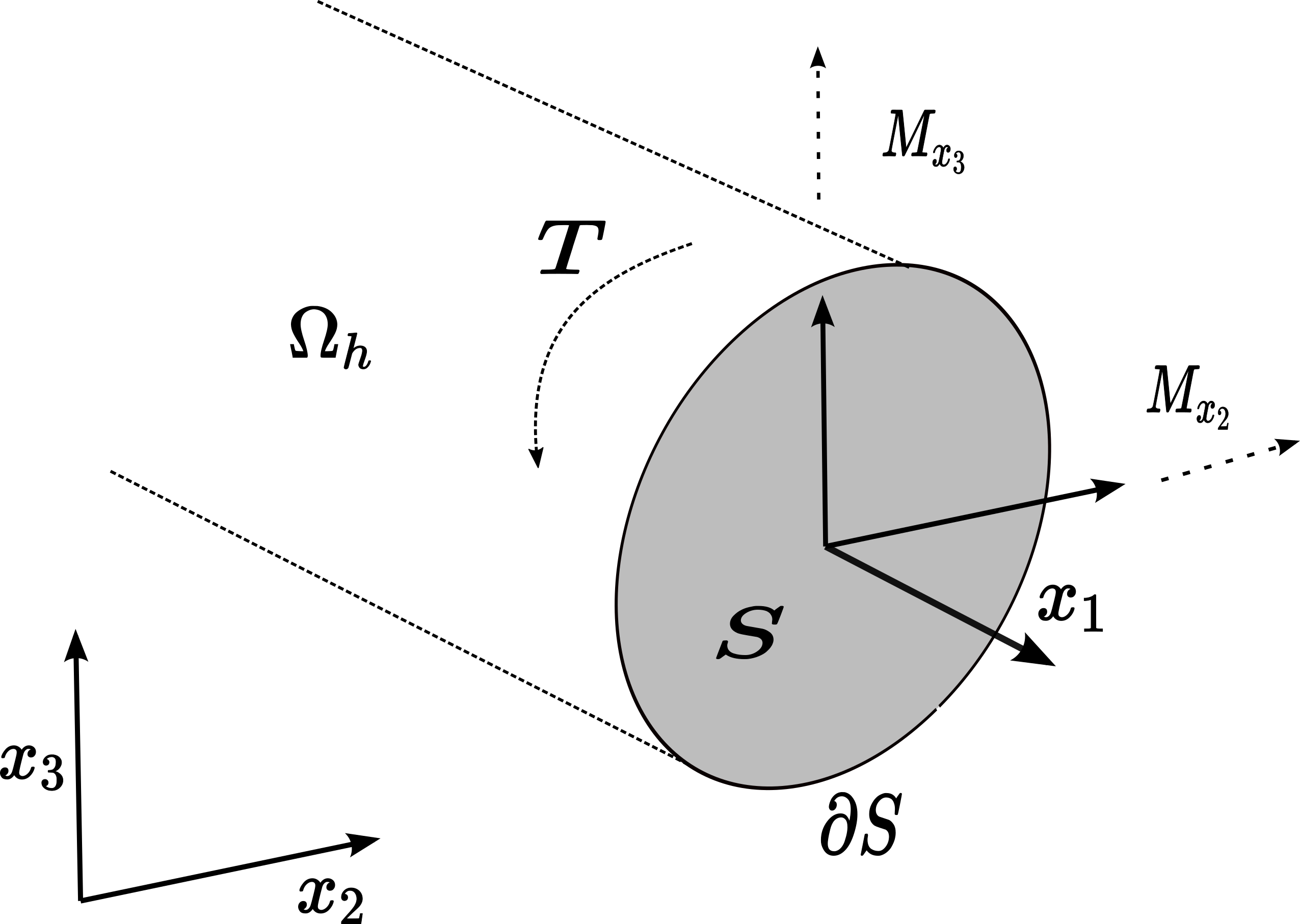}} \hspace{2em}
\caption[]{Inextensible rod subject to bending moments $M_{x_2},M_{x_3}$ and torsional moment $T$.}
\label{fig:1}
\end{figure}
Then, the torsional rigidity $D_T$, that is, the coefficient relating the squared torsion (per unit length) of a rod to it's elastic energy (per unit length),  can be determined by 
\begin{align*}
 D_{T}&\coloneqq \int\limits_{S}\mu(x_2,x_3)(x_2^2+x_3^2-x_2\partial_{x_3}w+x_3\partial_{x_2}w)~\mathrm{d}x_2\mathrm{d}x_3,  
\end{align*} 
where $w$ is the torsion function associated with the cross-section $S$, see~\eqref{eq:warp}. This torsion function describes the equilibrium out-of plane displacement of a twisted rod cross-section, see e.g. \cite[Chapter 9]{sadd2009elasticity}.

Moreover, by using the second moments of inertia and the product of inertia, respectively given by
\begin{align*}
    D_{x_2}=\int\limits_{S}E(x_2,x_3)&x_2^{2}~ \mathrm{d}x_2\mathrm{d}x_3,\quad
    D_{x_3}=\int\limits_{S}E(x_2,x_3)x_3^{2} ~ \mathrm{d}x_2\mathrm{d}x_3,\\
    D_{x_2x_3}&=\int\limits_{S}E(x_2,x_3)x_2x_3~ \mathrm{d}x_2\mathrm{d}x_3,
\end{align*}
we can determine the maximum bending and minimum bending rigidities $D_{\textrm{max}}$ and $D_{\textrm{min}}$ along the principal axes, as
\begin{equation*}
D_{\textrm{max/min}} = \frac{D_{x_2}+D_{x_3}}{2} \pm \sqrt{\frac{(D_{x_2}-D_{x_3})^{2}}{4}+(D_{x_2x_3})^{2}} =: D_{\textrm{mean}}\pm RM.
\end{equation*}
We note that the values $D_{\textrm{max/min}}$ are simply the two eigenvalues of the $2\times 2$ symmetric matrix with $D_{x_2}$, $D_{x_3}$ on the diagonal and $D_{x_2 x_3}$ on the off diagonal entries. This matrix relates the applied bending moment with the curvature of a rod, see \eqref{eq:lin}. 
The eigenvalues and eigenvectors of this matrix thus yield the maximal and minimal bending stiffness and the corresponding axes.

\newpage To allow the presence of \textit{multi-materials} inside $S$, we describe the space-dependent material constants $\mu$ and $E$ in terms of a scalar density function $u:S\to[0,1]$, which is assumed to be bounded from below by a positive constant $c$, that is
\begin{equation}\label{condu}
    u(x_2,x_3)\geq c>0\quad\text{for any }(x_2,x_3)\in S.
\end{equation}
By normalizing with respect to the moduli $\mu_\text{norm}$ and $E_\text{norm}$ of the stiffest material, we then replace $\mu$ and $E$, respectively, with the quantities
\begin{align*}
    \tilde\mu(x_2,x_3):= \mu_\text{norm} u(x_2,x_3) \quad\text{and} \quad\tilde E(x_2,x_3):= E_\text{norm} u(x_2,x_3).
\end{align*}
An optimization of the bending and torsional rigidities of the rod therefore consists of determining the optimal distributions and shapes of the different (competing) materials inside $S$.

The problem of finding optimal shapes and optimal topologies in structural mechanics has a long history.
Without aiming for completeness, we refer the interested reader to \cite{allaire1993numerical,bendsoe1995optimization,MR0820342,michell1904lviii,thomsen1992topology}, for an introduction on the subject, and to \cite{blank2016sharp,bourdin2003design,takezawa2010shape}, for recent approaches involving phase field methods, that are the ones that motivated the development of this work.

 Here, we are interested in determining the arrangements of two different materials inside the cross-section $S$, where the first material is softer and the other stiffer, in a way such that the bending and torsional rigidities of the rod are optimal.

This study is justified by observations on the morphology of plant stems, where the mechanical behaviour of the stem, subjected to bending and torsional loads, is mainly determined by two competing mechanically decisive materials distributed inside the stem cross-section~\cite{rowe2004diversity,wolff2022charting,wolff2021influence}.
In Fig. \ref{fig:condylo}, as an explanatory model, the cross-section of a liana of the type \textit{Condylocarpon Guianense}, including the stiffer secondary xylem (\textbf{1}) and the softer cortex (\textbf{2}), is illustrated.
\begin{figure}[H]
\centering
{\includegraphics[width=0.25\textwidth]{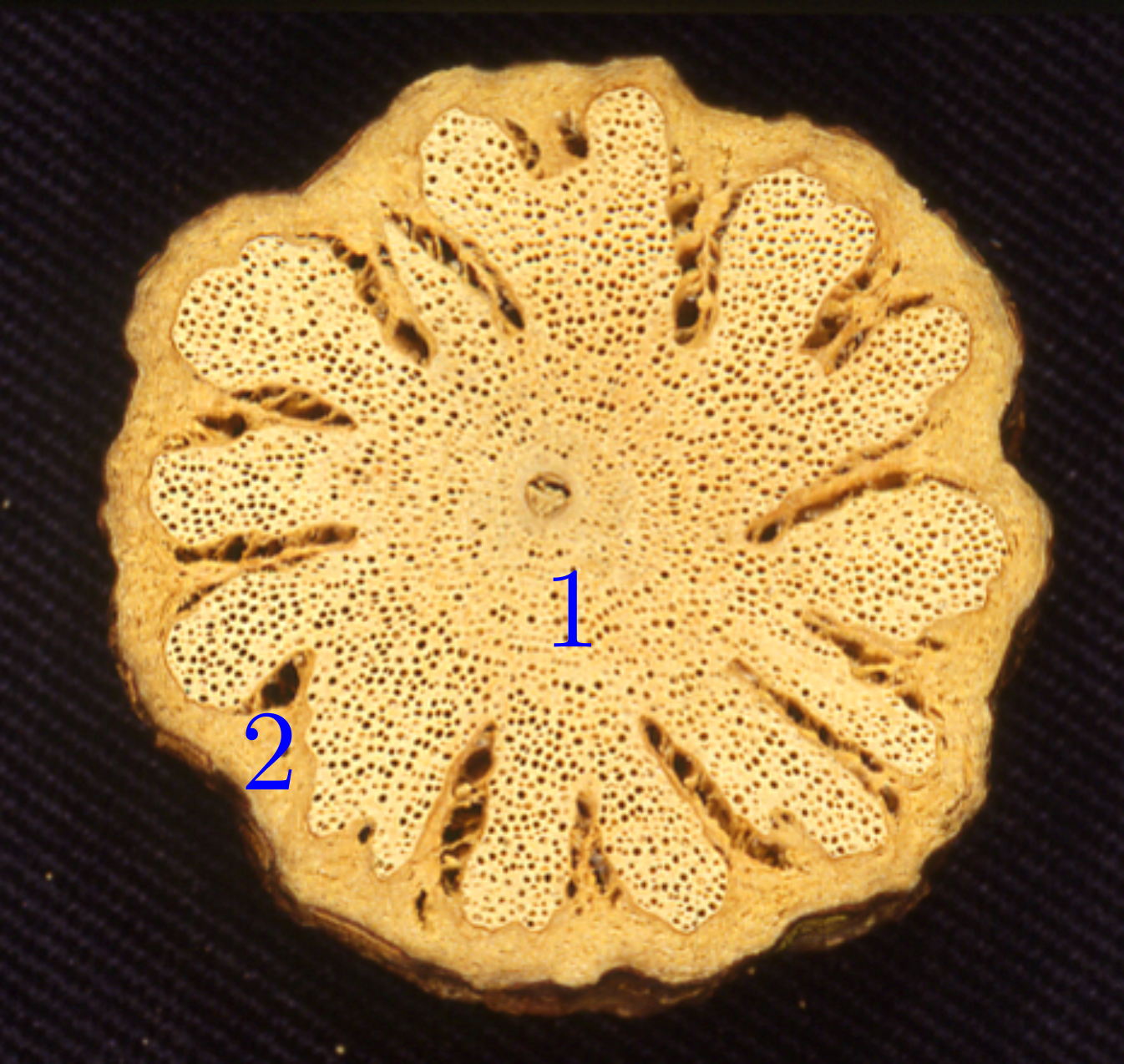}}
\caption[]{Cross-sections of liana \textit{Condylocarpon Guianense} in the non-self-supporting old ontogenetic stage after attachment to a support. The secondary xylem is marked with~(\textbf{1}) and the cortex is marked with~(\textbf{2}).~\textcopyright~ Plant Biomechanics Group Freiburg, annotated and used with permission.}
\label{fig:condylo}
\end{figure}
As pointed out in \cite{niklas1992plant,vogel2007living}, from an evolutionary point of view, plants are not inclined by a maximization of rigidity, but rather by an optimization of both strength and flexibility.
In general, a high ratio between the minimal bending rigidity $D_{\rm min}$ and the torsional rigidity $D_T$ can be observed.
This ratio is known in the literature as \textit{twist-to-bend ratio}~\cite{niklas1992plant,Vogel1992}, and a high ratio means high bending rigidity, compared to a lower torsional rigidity.

With this in mind, the shape optimization problem we will address consists of minimizing the following functional
\begin{align*}
     E_{\rm norm}\big(\sigma_1 D_{\rm mean}(u)+ \sigma_2 RM(u)\big)+ \sigma_3\mu_{\rm norm}D_{T}(u)+\gamma\operatorname{Per}(\{u=1\}),
\end{align*}
where the objectives are the mean bending rigidity $D_{\rm mean}$, the non-symmetric part $RM$, and the torsional rigidity $D_T$.
The competing regularizing term $\operatorname{Per}(\{u=1\})$ represents instead the perimeter of the regions inside the cross-section $S$ in which only the stiffer material is present, and the constants $\sigma_1,\sigma_2,\sigma_3, \gamma \in \mathbb{R}$ are weighting factors depending on the physical problem.
In the last section of the paper, we show some possible choices of $\sigma_1,\sigma_2,\sigma_3$ and $\gamma$, in order to numerically compute the solutions of the optimization problem via a phase field approach.

The paper is organized as follows:
in Section \ref{sec:mainTh}, we present an alternative approach, compared to Neukamm \cite{neukamm2011rigorous}, to derive the bending and torsional rigidities of an isotropic non-homogeneous elastic rod following the ideas of Mora and M\"uller \cite{mora2002derivation}, which consists in studying the asymptotic behaviour of a sequence of appropriate nonlinear elastic energies via $\Gamma$-convergence.

In Section \ref{sec:opt_rig}, we study the above optimization problem and provide, in Theorem \ref{thm:gamma_conv}, a sharp interface asymptotic of the phase field approach via $\Gamma$-convergence.
In Corollary \ref{lastcorollary}, through the standard technique derived from Modica and Mortola theory for phase transitions \cite{modica1987gradient}, we then recover the existence of solutions to the sharp interface optimization problem, by studying the limiting behaviour of solutions of the diffuse interface problems.

We conclude by studying, in Section~\ref{numerics}, the time discrete $L^2$-gradient flows of the approximating energies and, through a steepest descent approach, we present numerical results concerning the optimal distribution of the two materials inside $S$.
We then consider different optimization problems arising from different choices of the relative weightings factors and finally compare the results to the development of different morphologies in plant stems.
A graphical overview of the results is given in Fig. \ref{fig:num_sim}.


\section*{Notation}


Through the paper we will use the following notation: 
\begin{itemize}
    \item $\mathbb{M}^{3\times 3}_{\rm sym}$ and $\mathbb{M}^{3\times 3}_{\rm skew}$ denote, respectively, subspaces of $\mathbb{M}^{3\times 3}$ of symmetric and skew-symmetric matrices.
    \item For any $w\in W^{1,2}(\Omega)$, $y\in W^{1,2}(\Omega;\mathbb{R}^3)$ and $A=(A_1|A_2|A_3)\in\mathbb{M}^{3\times 3}$ we denote
        \begin{align*}
            \partial_{x_i} w&=\frac{\partial w}{\partial x_i}\quad\text{for any }i\in\{1,2,3\},\\
            y_{,i}&=(\partial_{x_i} y_1,\partial_{x_i} y_2, \partial_{x_i}y_3)\quad\text{for any }i\in\{1,2,3\},\\
            \nabla_h y&=\left(y_{,1}\left|\frac{1}{h} y_{,2} \right| \frac{1}{h}y_{,3}\right)\quad\text{for any }h\in\mathbb{R}^+,\\
            A_{,i}&=(A_{1,i},A_{2,i},A_{3,i})\quad\text{for any }i\in\{1,2,3\}.
        \end{align*}
\end{itemize}


\section{The mathematical model of non-homogeneous elastic rods}\label{sec:mainTh}


\subsection{Nonlinear bending-torsion theory and dimension reduction}


To study the bending and torsional rigidities of non-homogeneous rods we consider the following rescaled version of the $u$-dependent elastic energy~\eqref{energy0} 
\begin{equation*}
    I^{(h)}(y)\coloneqq\int\limits_{\Omega}u(x_2,x_3) W(\nabla_{h} y(x))\mathrm{d}x=\frac{1}{h^{2}}\int\limits_{\Omega_h}u\left(\frac{z_2}{h},\frac{z_3}{h}\right) W(\nabla v(z))\mathrm{d}z.
\end{equation*}
Here, to overcome the dependence of the rod $\Omega_h:=(0,L)\times hS$ on the scaling parameter $h\in\mathbb{R}^+$, we introduced the change of variables 
\[
(x_1,x_2,x_3):=\left(z_1,\frac{z_2}{h},\frac{z_3}{h}\right)\quad\text{for any }(z_1,z_2,z_3)\in\Omega_h,
\]
and denoted the new domain of integration by $\Omega:=(0,L)\times S$.
Moreover, we replaced $v$ and $u$, respectively, with $y\in W^{1,2}(\Omega;\mathbb{R}^3)$ and a not relabelled $u\in L^\infty(S)$, defined by
\begin{align*}
    y(x)&\coloneqq v(z(x))\\
    u(x_2,x_3)&\coloneqq u(z_2(x),z_3(x))
\end{align*}
for any $x\in\Omega$. In what follows, we assume the following centering condition
\begin{align*}
\int\limits_{S}u(x_2,x_3)x_2~\mathrm{d}x_2\mathrm{d}x_3 = \int\limits_{S}u(x_2,x_3)x_3~\mathrm{d}x_2\mathrm{d}x_3 &=0,
\end{align*}
which is satisfied by a proper choice of the coordinate system and is therefore not restrictive. 

As mentioned in the Introduction, the bending and torsional rigidities can be determined by an one-dimensional limit model, resulting from the study of the asymptotic behaviour, as $h$ goes to $0$, of a rescaled version of $I^{(h)}$ through the process of dimension reduction.
The mathematical tool that allows this study is the so-called De Giorgi and Franzoni $\Gamma$-convergence \cite{de1975tipo}.
A review of the literature on $\Gamma$-convergence is beyond the scope of this article.
We refer the interested reader to the in-depth monograph \cite{dal1993introduction} for a comprehensive introduction and deep analysis of the topic.

The $\Gamma$-convergence result that motivates and justifies the first part of this section is the following adaptation, to the case of multi-materials, of \cite[Theorem 3.1]{mora2002derivation} (which is also a special subcase of \cite[Theorem 3.1]{neukamm2011rigorous}).
It is based on the geometric rigidity result by Friesecke, James and M\"uller \cite{friesecke2002theorem}, which is essential to prove compactness.
A detailed proof of the Theorem \ref{mainTh} can be found in \cite[Section 6]{wolffthesis23}.

We first define the following subset of isometric deformations of $(0,L)$.
\begin{definition}
Let $(y,d_2,d_3)\in W^{2,2}(\Omega; \mathbb{R}^{3})\times W^{1,2}(\Omega;\mathbb{R}^{3})\times W^{1,2}(\Omega;\mathbb{R}^{3})$.
We say that $(y,d_2,d_3)$ belongs to the class $\mathcal{A}$ if at the same time the following three conditions hold:
\begin{itemize}
    \item $y$, $d_2$ and $d_3$ do not depend on $x_2$ and $x_3$,
    \item $|y_ {,1}|=|d_2|=|d_3|=1$,
    \item $y_{,1}\cdot d_{2} =  y_{,1}\cdot d_{3}= d_2\cdot d_{3}=0$.
\end{itemize}
We then define the following matrix-valued function
\begin{equation*}
R\coloneqq(y_{,1}\left|d_2\right|d_3)\in W^{2,2}((0,L); \mathbb{M}^{3\times 3}).
\end{equation*}
\end{definition}
Notice that $R$ is independent of $x_2$ and $x_3$, it belongs to the set $SO(3)$ and $R^{T}R_{,1}\in\mathbb{M}^{3\times 3}_{\rm skew}$, being
\begin{align*}
    (R^{T}R_{,1})_{kk}&=0\quad\text{for any }k\in\{1,2,3\},\\
    (R^{T}R_{,1})_{1k}&=-(R^{T}R_{,1})_{k1}=y_{,1}\cdot d_{k,1}\quad\text{for any }k\in\{2,3\},\\
    (R^{T}R_{,1})_{23}&=-(R^{T}R_{,1})_{32}=d_{2}\cdot d_{3,1}.
\end{align*}
We remark that the second relation above is related to the curvature caused by the bending moments, while the third one is related to the torsion of the rod, caused by torsional moments.
\begin{theorem}\label{mainTh}
Assume that the stored energy $W$ satisfies hypotheses $1.,2.,3.$ given in the Introduction, let $Q_3$ be twice the quadratic form of linearized elasticity
\begin{align*}
    Q_3(G)\coloneqq \frac{\partial^{2} W}{\partial F^{2}}(Id)(G,G)\quad\text{for any }G\in \mathbb{M}^{3\times 3}_{\rm sym},
\end{align*}
and denote $Q_2:\mathbb{M}^{3\times 3}_{\rm skew}\to[0,+\infty)$ the quadratic form defined through the minimization problem
\begin{align}\label{minprob1}
    Q_2(A)\coloneqq \min_{\alpha \in W^{1,2}(S;\mathbb{R}^{3})} \int_{S} u(x_2,x_3)\,Q_3\left(A\left(\begin{array}{rrr}0 \\x_{2} \\x_{3}\end{array}\right) \Big| \alpha_ {,2} \Big| \alpha_{,3}\right)\mathrm{d}x_2\mathrm{d}x_3
\end{align}
for any $A\in\mathbb{M}^{3\times 3}_{\rm skew}$, where the density function $u\in L^\infty(S)$ satisfies \eqref{condu}.

Then, there exists $I:W^{1,2}(\Omega;\mathbb{R}^{3})\times L^2(\Omega;\mathbb{R}^{3})\times L^2(\Omega;\mathbb{R}^{3})
\to\mathbb{R}\cup\{+\infty\}$ such that, up to subsequences,
\[
\left(\frac{1}{h^{2}}I^{(h)}\right)_h\quad\Gamma\text{-converges to }I,\text{ as }h\to0,
\]
in the strong and weak topologies of $W^{1,2}(\Omega;\mathbb{R}^{3})\times L^2(\Omega;\mathbb{R}^{3})\times L^2(\Omega;\mathbb{R}^{3})$, and the functional $I$ can be represented by
\begin{align*}
    I(y,d_2,d_3)=\left\{\begin{array}{ll} \frac{1}{2}\int\limits_{0}^{L} Q_2(R^{T}R_{,1})~\mathrm{d}x_1\quad &\text{if } (y,d_2,d_3)\in \mathcal{A}  \\
         +\infty &\text{elsewhere}\end{array}\right. 
\end{align*}
for any $(y,d_2,d_3)\in W^{1,2}(\Omega;\mathbb{R}^{3})\times L^2(\Omega;\mathbb{R}^{3})\times L^2(\Omega;\mathbb{R}^{3})$.
\end{theorem}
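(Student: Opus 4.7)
The plan is to follow the Mora–Müller strategy from \cite{mora2002derivation}, adapted to the presence of the weight $u$, which is the feature that distinguishes this statement from the homogeneous setting. Throughout, the lower bound $u\ge c>0$ is what makes every classical step go through: it allows us to control the \emph{unweighted} elastic energy $\int_\Omega W(\nabla_h y)\,\mathrm{d}x$ by $\frac{1}{c}I^{(h)}(y)$, and therefore to invoke the Friesecke–James–Müller rigidity theorem exactly as in the single-material case.

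First I would prove \textbf{compactness}. Given a sequence $y^{(h)}$ with $\frac{1}{h^2}I^{(h)}(y^{(h)})\le C$, the rigidity theorem of \cite{friesecke2002theorem}, applied on small slices $(a,a+h)\times S$ of $\Omega$ and patched together in the standard way, produces a piecewise constant rotation field which after mollification yields $R^{(h)}\in W^{1,2}((0,L);\mathbb{M}^{3\times 3})$ with values in $SO(3)$ such that
\begin{equation*}
\bigl\|\nabla_h y^{(h)}-R^{(h)}\bigr\|_{L^2(\Omega)}^2 + h^2\bigl\|(R^{(h)})_{,1}\bigr\|_{L^2}^2 \le C h^2.
\end{equation*}
Up to subsequences $R^{(h)}\to R$ strongly in $L^2$ and weakly in $W^{1,2}$. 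Passing to the limit in the columns, $R=(y_{,1}|d_2|d_3)$ with $y_{,1},d_2,d_3$ independent of $(x_2,x_3)$, $R\in SO(3)$ and $R^T R_{,1}\in\mathbb{M}^{3\times 3}_{\rm skew}$, so $(y,d_2,d_3)\in\mathcal{A}$; this also shows that $I=+\infty$ outside $\mathcal{A}$.

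Second, the \textbf{liminf inequality}. On the set $\{|\nabla_h y^{(h)}-R^{(h)}|\le \delta\}$ a second order Taylor expansion of $W$ at $SO(3)$ (using hypothesis $1.$) yields
\begin{equation*}
u(x_2,x_3)\,W(\nabla_h y^{(h)}) \ge \tfrac{h^2}{2}\,u(x_2,x_3)\,Q_3\!\bigl(\operatorname{sym}\,G^{(h)}\bigr) - o(h^2)\,u(x_2,x_3)\,|G^{(h)}|^2,
\end{equation*}
where $G^{(h)}:=\frac{1}{h}(R^{(h)})^T(\nabla_h y^{(h)}-R^{(h)})$. On the complement of that set, frame indifference and the coercivity assumption $3.$ let us discard the contribution. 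One shows $G^{(h)}\rightharpoonup G$ in $L^2(\Omega;\mathbb{M}^{3\times 3})$ with first column $(R^T R_{,1})\bigl(0,x_2,x_3\bigr)^T$ and the last two columns of the form $\alpha_{,2},\alpha_{,3}$ for some $\alpha\in W^{1,2}(S;\mathbb{R}^3)$ (possibly after a Sobolev extension argument in $x_1$; this averaging/extraction is the technical heart of the argument and I expect it to be the main obstacle, because one must identify the structure of the limit of the scaled strain and decouple the $x_1$ dependence from the cross-sectional dependence). Lower semicontinuity of the convex functional associated to $Q_3$, together with the pointwise definition of $Q_2$ in \eqref{minprob1}, then yields
\begin{equation*}
\liminf_{h\to 0}\frac{1}{h^2}I^{(h)}(y^{(h)}) \ge \frac{1}{2}\int_0^L Q_2\bigl(R^T R_{,1}\bigr)\,\mathrm{d}x_1.
\end{equation*}

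Finally, the \textbf{limsup/recovery sequence}. For $(y,d_2,d_3)\in\mathcal{A}$, by a density argument one may assume $R\in C^2$. For a given $A=R^T R_{,1}$, let $\alpha_{x_1}$ be a near-minimizer of the cell problem \eqref{minprob1}, measurable in $x_1$ (selection is standard since $u$ is fixed and $Q_3$ is continuous). Set
\begin{equation*}
y^{(h)}(x) := y(x_1) + h\,x_2\,d_2(x_1) + h\,x_3\,d_3(x_1) + h^2 R(x_1)\,\alpha_{x_1}(x_2,x_3),
\end{equation*}
so that $\nabla_h y^{(h)} = R + h\,R\,G + o(h)$ with $G$ the competitor in the definition of $Q_2(A)$. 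A direct computation using the $C^2$ expansion of $W$ near $SO(3)$ and dominated convergence gives
\begin{equation*}
\lim_{h\to 0}\frac{1}{h^2}I^{(h)}(y^{(h)}) = \frac{1}{2}\int_0^L\! \int_S u(x_2,x_3)\,Q_3\Bigl(A\bigl(0,x_2,x_3\bigr)^T\big|\alpha_{,2}\big|\alpha_{,3}\Bigr)\,\mathrm{d}x_2\mathrm{d}x_3\,\mathrm{d}x_1,
\end{equation*}
and taking the infimum over $\alpha$ provides a recovery sequence matching the liminf bound, which closes the $\Gamma$-convergence proof.
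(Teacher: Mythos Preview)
Your proposal is correct and follows precisely the route the paper indicates: the paper does not give a self-contained proof but states that Theorem~\ref{mainTh} is an adaptation of \cite[Theorem~3.1]{mora2002derivation} (and a subcase of \cite[Theorem~3.1]{neukamm2011rigorous}), resting on the Friesecke--James--M\"uller rigidity theorem for compactness, with full details deferred to \cite[Section~6]{wolffthesis23}. Your sketch reproduces exactly this Mora--M\"uller scheme, with the key observation that $u\ge c>0$ lets the rigidity estimate and the Taylor expansion go through unchanged, so your approach and the paper's coincide.
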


Note that the representation of the $\Gamma$-limit energy $I$ is well-posed, having the minimum problem~\eqref{minprob1} a unique solution $\alpha$, module a constant, which can be equivalently computed on the class of functions
\begin{align*}
    V\coloneqq{\Bigg\{}\alpha\in W^{1,2}(S;\mathbb{R}^{3}):\int_{S}u\alpha ~\mathrm{d}x_2\mathrm{d}x_3=\int_{S}u\nabla \alpha ~\mathrm{d}x_2\mathrm{d}x_3=0{\Bigg\}}\,.
\end{align*}
The existence of such a solution can be obtained by the direct method in the calculus of variations, noting that the functional we want to minimize is lower semicontinuous, with respect to the weak topology of $W^{1,2}(S;\mathbb{R}^3)$, and that the compactness is guaranteed by the fact that $Q_3$ is strictly positive definite on $\mathbb{M}^{3\times 3}_{\rm sym}$.
As for the uniqueness of the solution, it follows naturally from the strict convexity of $Q_3$ on $\mathbb{M}^{3\times 3}_{\rm sym}$, which is guaranteed by the assumption 3. on the stored energy function $W$.


\subsection{The case of isotropic materials}\label{sec:isotropic}
We now consider an application of Theorem~\ref{mainTh} to the case of \textit{isotropic materials} and derive the bending and torsional rigidities of non-homogeneous elastic rods, following the path of \cite{mora2002derivation}.
We then assume that the stored energy function $W$ also satisfies the additional isotropic assumption $4.$ given in the Introduction, so that $Q_3$ takes the well-known form
\begin{align}\label{iso_pc_ws}
    Q_3(G)=2\mu \left|\frac{G+G^{T}}{2}\right|^{2}+\lambda (\operatorname{trace}(G))^{2}\quad\text{for any }G\in \mathbb{M}^{3\times 3}_{\rm sym},
\end{align}
where $\mu$ and $\lambda$ are the so-called Lame's constants.

For any fixed skew-symmetric matrix $A=[a_{ij}]_{i,j=1,2,3}\in\mathbb{M}^{3\times 3}_{\rm skew}$
\[A=\begin{pmatrix}
0 & a_{12} & a_{13}\\
-a_{12} & 0 & a_{23}\\
-a_{13} & -a_{23} & 0
\end{pmatrix}\]
one can show, by the standard theory of PDEs, that the solution $\alpha=(\alpha_1,\alpha_2,\alpha_3)\in W^{1,2}(S;\mathbb{R}^3)$ of the minimization problem~\eqref{minprob1} satisfies, in the distributional sense, the following Euler-Lagrange equation
\begin{equation*}
    \displaystyle{\begin{cases}
    \operatorname{div}\Big(u\big[\partial_{x_2}\alpha_1+a_{23}x_3,\partial_{x_3}\alpha_1-a_{23}x_2\big]\Big) = 0 \quad&\text{in } S,\\
    \partial_{\theta}\alpha_1=-a_{23}(x_3,-x_2)\cdot \theta \quad&\text{on }\partial S,
    \end{cases}}
\end{equation*}
for what concerns the component $\alpha_1$, and the following system of Euler-Lagrange equations
\begin{equation*}
    \displaystyle{\begin{cases}
    \operatorname{div}( u[(2\mu+\lambda)\partial_{x_2}\alpha_2+\lambda(\partial_{x_3}\alpha_3+a_{12}x_2+a_{13}x_3),(\mu\partial_{x_3}\alpha_2+\mu\partial_{x_2}\alpha_3)])=0\\
    \operatorname{div}(u[(\mu\partial_{x_3}\alpha_2+\mu\partial_{x_2}\alpha_3),(2\mu+\lambda)\partial_{x_3}\alpha_3+\lambda(\partial_{x_2}\alpha_2+a_{13}x_3+a_{12}x_2)]) = 0
    \end{cases}}
\end{equation*}
in $S$, with boundary conditions
\begin{equation*}
    \displaystyle{\begin{cases}
    \big[(2\mu+\lambda)\partial_{x_2}\alpha_2+\lambda\partial_{x_3}\alpha_3,\mu\partial_{x_3}\alpha_2+\mu\partial_{x_2}\alpha_3\big]\cdot \theta =-\lambda(a_{12}x_2+a_{13}x_3)\cdot \theta_2\\
    \big[(\mu\partial_{x_3}\alpha_2+\mu\partial_{x_2}\alpha_3),(2\mu+\lambda)\partial_{x_3}\alpha_3+\lambda\partial_{x_2}\alpha_2\big]\cdot \theta = -\lambda(a_{12}x_2+a_{13}x_3)\cdot \theta_3
    \end{cases}}
\end{equation*}
on $\partial S$, where $\theta=(\theta_2,\theta_3)\in\mathbb{R}^2$ is the outer unit normal for the boundary $\partial S$ and $\partial_{\theta}$ denotes the normal derivative.
\begin{remark}
In the case of homogeneous materials, i.e. when the density $u$ is constant, the previous Euler-Lagrange equations lead to \cite[equations (3.19)-(3.20)]{mora2002derivation}, as expected.
\end{remark}
The solution to the previous Euler-Lagrange equations, belonging to the space $V$, is provided by $\alpha=(\alpha_1,\alpha_2,\alpha_3)$, whose components are
\begin{align*}
    \displaystyle{\begin{cases}\alpha_1(x_2,x_3)=a_{23}w(x_2,x_3),\\
    \alpha_2(x_2,x_3)= -\frac{1}{4}\frac{\lambda}{\mu+\lambda}(a_{12}x_2^2-a_{12}x_3^2+2a_{13}x_2x_3), \\ 
     \alpha_3(x_2,x_3)= -\frac{1}{4}\frac{\lambda}{\mu+\lambda}(-a_{13}x_2^2+a_{13}x_3^2+2a_{12}x_2x_3),
     \end{cases}}
\end{align*}
where $w:S\to\mathbb{R}$ is the torsion function on the non-homogeneous cross-section $S$, i.e. the function solving the Neumann problem
\begin{align}\label{eq:warp}
    \displaystyle{\begin{cases} \operatorname{div}\Big(u\big[\partial_{x_2}w+x_3,\partial_{x_3}w-x_2\big]\Big) = 0 \quad&\text{in }S,\\
    \partial_{\theta}w = -(x_3,-x_2)\cdot \theta \quad&\text{on }\partial S.
    \end{cases}}
\end{align}
By computing the value of the functional at these minimum points, we get that the corresponding value in $Q_2(A)$ is determined by
\begin{equation}\label{energy_new}
\begin{split}
    Q_2(A)&=\frac{\mu(3\lambda+2\mu)}{\lambda+\mu}\left(a_{12}^2\int\limits_{S}u(x_2,x_3)x_2^{2}\mathrm{d}x_2\mathrm{d}x_3+a_{13}^2\int\limits_{S}u(x_2,x_3)x_3^{2}\mathrm{d}x_2\mathrm{d}x_3\right)\\
    &\quad+\frac{\mu(3\lambda+2\mu)}{\lambda+\mu}\left(a_{12}a_{13}\int\limits_{S}u(x_2,x_3)x_2x_3\mathrm{d}x_2\mathrm{d}x_3\right)+\mu D_{T} a_{23}^{2},
\end{split}
\end{equation}
where the torsional rigidity $D_T$ is defined as
\begin{align}\label{tor_rgd}
    D_{T}=D_{T}(S)&\coloneqq \int\limits_{S}u(x_2,x_3)(x_2^2+x_3^2-x_2\partial_{x_3}w+x_3\partial_{x_2}w)\mathrm{d}x_2\mathrm{d}x_3. 
\end{align}

By considering the curvatures $a_{12}$ and $a_{13}$ in \eqref{energy_new}, we obtain the well-known moment curvature relation in bending theory (see e.g.~\cite{sadd2009elasticity})
\begin{align}\label{eq:lin}
\begin{pmatrix}
M_{ x_2 } \\
M_{ x_3 }
\end{pmatrix}
=  \frac{\mu(3\lambda+2\mu)}{\lambda+\mu} \begin{pmatrix}
D_{x_2}^{u}& D_{x_2x_3}^{u}\\
D_{x_2x_3}^{u}&D_{x_3}^{u}
\end{pmatrix}
\cdot \begin{pmatrix}
a_{12}\\
a_{13}
\end{pmatrix},
\end{align}
where $M_{x_2}$ and $M_{x_3}$ denote the bending moments applied at the end of the rod ($x_1=L$) and the $u$-dependent moments of inertia $D_{x_2}^{u}$ and $D_{x_3}^{u}$, as well as the product of inertia $D_{x_2x_3}^{u}$, are respectively given by
\begin{align*}
    D_{x_2}^{u}&=\int\limits_{S}u(x_2,x_3)x_2^{2}~ \mathrm{d}x_2\mathrm{d}x_3, \\
    D_{x_3}^{u}&=\int\limits_{S}u(x_2,x_3)x_3^{2} ~ \mathrm{d}x_2\mathrm{d}x_3, \\ 
    D_{x_2x_3}^{u}&= \int\limits_{S}u(x_2,x_3)x_2x_3~ \mathrm{d}x_2\mathrm{d}x_3.
\end{align*}
\begin{remark}\label{DminDmax}
The maximum bending rigidity $D_{\rm max}$ and the minimum bending rigidity $D_{\rm min}$ along the principal axes are determined by the maximal and minimal eigenvalues of the matrix in~\eqref{eq:lin}, leading to
\begin{equation}\label{eq:bend_rig}
D_{\textrm{max/min}}(u)= D_{\textrm{mean}}(u)\pm RM(u).
\end{equation}
Here, 
\begin{align*}
    D_{\textrm{mean}}(u) &= \frac{D^{u}_{x_2}+D^{u}_{x_3}}{2},\\  RM(u)&=\sqrt{\frac{(D^{u}_{x_2}-D^{u}_{x_3})^{2}}{4}+(D^{u}_{x_2x_3})^{2}}.
\end{align*}
\end{remark}


\subsection{Derivation of the torsional rigidity by stress functions}\label{prandtlsubsect}
The aim of the last part of this section is to obtain an equivalent formulation for the torsional rigidity $D_T$~\eqref{tor_rgd} by means of the \textit{Prandtl's stress function}.
It is a common trick in the literature, and allows three unknown stresses to be reduced to a single unknown stress function.

In this formulation, the shear stress components $\tau_{x_1x_2}$ and $\tau_{x_1x_3}$ of the Cauchy stress tensor are described by the derivatives of the auxiliary (scalar) function $\Phi:S\to\mathbb{R}$, through
\[
\tau_{x_1x_2}=\partial_{x_3}\Phi(x_2,x_3),\quad\tau_{x_1x_3}=-\partial_{x_2}\Phi(x_2,x_3),
\]
meaning that the Prandtl's stress function $\Phi$ is determined by the conditions
\begin{equation}\label{def_prandtl}
\begin{split}
    \frac{1}{u}\partial_{x_3}\Phi(x_2,x_3)&= \partial_{x_2}w(x_2,x_3)+x_3,\\
    \frac{1}{u}\partial_{x_2}\Phi(x_2,x_3)&= - (\partial_{x_3}w(x_2,x_3)-x_2),
\end{split}
\end{equation}
where $w$ is the torsion function introduced in \eqref{eq:warp}.

The following representation via Prandtl's stress functions will be a key point in the following sections.
It allows the Neumann boundary value problem \eqref{eq:warp} to be transformed into the equivalent problem with Dirichlet boundary conditions \eqref{boundvalue}, which is much easier to handle numerically. 

We assume that the cross-section $S$ has an outer boundary $\Gamma_0$ and finitely many (connected) inner boundary components $\Gamma_i$, $i\in\{1,...,J\}$, such that
 \[
 \bigcup_{i=0}^J\Gamma_i=\partial S.
 \]
Fix $i\in\{1,...,J\}$.
Then, by parametrizing any boundary component $\Gamma_i$ with $s\rightarrow (x_2(s),x_3(s))$, the corresponding components of the outer unit normal $\zeta=(\zeta_2,\zeta_3)\in\mathbb{R}^2$ for the boundary curve $\Gamma_i$ can be expressed as
\begin{align*}
	\zeta_2=\frac{d x_3}{ds},\quad
	\zeta_3=-\frac{d x_2}{ds},
\end{align*}
and, by the boundary condition on the Neumann problem \eqref{eq:warp} and by \eqref{def_prandtl}, we get
\begin{align*}
    \frac{d \Phi}{ds}=\frac{\partial \Phi}{\partial x_2}\frac{d x_2}{ds} +  \frac{\partial \Phi}{\partial x_3}\frac{d x_3}{ds} = 0\quad\text{on }\partial S.
\end{align*}
Therefore, for any connected component $\Gamma_i$ of the boundary $\partial S$, there exists a constant $\Phi_i\in\mathbb{R}$ such that $\Phi=\Phi_i$ on $\Gamma_i$.

By the transformation \eqref{def_prandtl}, the Neumann boundary value problem \eqref{eq:warp} can be reformulated as the Dirichlet boundary value problem
\begin{align}\label{boundvalue}
    \begin{cases}
    -\operatorname{div}\left(\frac{1}{u} \nabla\Phi\right) =2&\text{in }S, \\
    \Phi = \Phi_i &\text{on }\Gamma_i.
    \end{cases}
\end{align}

\begin{remark}
Note that we have assumed that $S$ is connected, as this is quite natural when working with rods.
For simply connected domains $S$, we also assume that $\Phi$ must be zero on $\partial S$ while, for multiply connected domains, that the value of $\Phi$ can be set to zero only on the outer boundary $\Gamma_{0}$ of $S$.  
\end{remark}

In \cite{sadd2009elasticity}, it has been pointed out that, in the inner boundaries $\Gamma_i$, the constants $\Phi_i$ are determined by the following condition 
\begin{align}\label{add_cond}
    \int\limits_{\Gamma_i} \frac{1}{u}\partial_{\theta}\Phi~\mathrm{d}s = 2|S_i| \quad\text{for any }i\in J,
\end{align}
where $\theta$ is the outer unit normal for $\partial S$ and $S_i$ are the sets enclosed by $\Gamma_i$.

Therefore, the torsional rigidity $D_{T}$ \eqref{tor_rgd} can be equivalently obtained by integrating the Prandtl's stress function $\Phi$ in $S$, leading to
\begin{align}\label{rig:prdt}
    D_{T}= 2\int\limits_{S}\Phi(x_2,x_3)~\mathrm{d}x_2\mathrm{d}x_3+ 2\sum\limits_{i=1}^{J}\Phi_i|S_i|. 
\end{align}
For more details about the representation \eqref{rig:prdt}, we refer the interested reader to~\cite{sadd2009elasticity}.

To derive a distributional formulation of the equation in \eqref{boundvalue}, we define the set $\tilde{S}$ as the set enclosed by the outer boundary $\Gamma_0$ of the cross-section $S$ (i.e. $S$ with any hole filled in, in case $S$ was not simply connected), and consider the space of admissible functions
\[
\mathcal{H}\coloneqq\{w\in W^{1,2}_0(\tilde{S})\colon\nabla w=0\text{ in }\tilde{S}\setminus S\}.
\]
By multiplying the equation in \eqref{boundvalue} by a test function $w\in \mathcal{H}$ and integrating over $S$, we obtain by partial integration 
\begin{align*}
    2\int\limits_{S}w~\mathrm{d}x_2\mathrm{d}x_3 &=  -\int\limits_{S} \operatorname{div}\left(\frac{1}{u} \nabla\Phi\right) w~\mathrm{d}x_2\mathrm{d}x_3\\
    &=\int\limits_{S}\frac{1}{u}\nabla \Phi \cdot \nabla w~\mathrm{d}x_2\mathrm{d}x_3 - \sum\limits_{i=1}^{J}\Bigg(\int\limits_{\Gamma_i}\Big(\frac{1}{u}\partial_{\theta}\Phi\Big) w ~\mathrm{d}s\Bigg),
\end{align*}
and, applying Gauss's theorem (with respect to $S_i$) on the last term on the r.h.s. above, and recalling that $\nabla w=0$ on $\tilde{S}\setminus S$, we find by~\eqref{add_cond}
\begin{align*}
  \int\limits_{\Gamma_i}\Big(\frac{1}{u}\partial_{\theta}\Phi\Big) w ~\mathrm{d}s= 2 w_i |S_i|
\end{align*}
for constants $w_i$, satisfying $w=w_i$ a.e. in $S_i$.

 Therefore, the distributional formulation of \eqref{boundvalue} reads
\begin{align}\label{eq:prandtl_phi}
   \int\limits_{\tilde{S}}\frac{1}{u}\nabla \Phi \cdot \nabla w~\mathrm{d}x_2\mathrm{d}x_3=2\int\limits_{\tilde{S}}w~\mathrm{d}x_2\mathrm{d}x_3 \quad\text{for all }w\in \mathcal{H}.
\end{align}

By Riesz's representation theorem, equation~\eqref{eq:prandtl_phi} admits a unique solution in $\mathcal{H}$ and the previous representation \eqref{rig:prdt} of the torsional rigidity $D_{T}$ can be written in a more compact way, by means of the larger set $\tilde{S}$, as
\begin{equation}\label{Dt}
    D_{T}= 2\int\limits_{S}\Phi(x_2,x_3)~\mathrm{d}x_2\mathrm{d}x_3+ 2\sum\limits_{i=1}^{N}\Phi_i|S_i|= 2\int\limits_{\tilde{S}} \Phi(x_2,x_3)~\mathrm{d}x_2\mathrm{d}x_3.
\end{equation}
In the following Section \ref{sec:opt_rig}, we will frequently use in our phase field approach this last representation of $D_T$.


\section{Optimization of the bending and torsional rigidities}\label{sec:opt_rig}


Inspired by the recent phase field approaches for structural topology optimization \cite{blank2016sharp,bourdin2003design}, we now study shape optimization problems for bending and torsion of inextensible non-homogeneous elastic rods involving two different isotropic materials, a stiffer and a softer, within a fixed cross-section $S$.

In what follows, the distribution of the materials inside $S$ is described by a function $\varphi$ belonging to the class
\[
\mathcal{U} := \Bigg\{ \varphi \in  BV(S;\{0,1\}) \colon \int\limits_{S} \varphi(x_2,x_3)~\mathrm{d}x_2\mathrm{d}x_3 = m_1|S|,\text{ with } m_1\in (0,1)\Bigg\},
\]
where the volume constraint $m_1|S|$ is given by means of the mass of the stiffer material $m_1$.

An equivalent description of such distribution is possible by considering the one-to-one corresponding function $u^\varphi\in BV(S;\{c,1\})$, defined by
\begin{align}\label{eq:u_phi}
  u^\varphi(x_2,x_3)=\varphi(x_2,x_3)(1-c)+c\quad\text{for any }(x_2,x_3)\in S,
\end{align}
for a given positive constant $c$.
This last representation is intended to make the notation more familiar, linking it to what was introduced in the previous section.

The set $\{u^\varphi=1\}$, or equivalently $\{\varphi=1\}$, then describes regions where only the stiffer material is present (up to Lebesgue measure zero sets), while the set $\{u^\varphi=c\}$ represents regions containing only the softer material.

 If $c=1$, we are in the special case of a single homogeneous material and
\[
u^\varphi(x_2,x_3)=1\quad\text{for any }(x_2,x_3)\in S.
\]

Denote $\mu_{\rm norm}$ and $\lambda_{\rm norm}$ the Lam\'e constants of the stiffer material. Then, for any $(x_2,x_3)\in S$, the Lam\'e constants $\mu$ and $\lambda$ become
\begin{align*}
    \mu(x_2,x_3)&=\mu_{\rm norm} u^{\varphi}(x_2,x_3),\\
    \lambda(x_2,x_3)&=\lambda_{\rm norm} u^{\varphi}(x_2,x_3).
\end{align*}

For ease of reading, this section is divided into three parts: in Section \ref{sub3.1}, we introduce the perimeter penalized shape optimization problem \eqref{min:sharp}. We are interested in proving the existence of solutions for \eqref{min:sharp}, by studying a sharp interface limit in the sense of Modica and Mortola \cite{modica1987gradient} (see Section \ref{sub3.3}).
This is done via $\Gamma$-convergence \cite{de1975tipo}, applied to the sequence of approximating problems \eqref{min:diff}, which will be introduced in Section \ref{sub3.2}.


\subsection{Perimeter penalized shape optimization}\label{sub3.1}

We aim to show the existence of solutions to the following minimization problem 
\begin{align}\label{min:sharp}
    \min_{\varphi \in \mathcal{U}} J_0(\varphi),
\end{align}
where the functional $J_0\colon \mathcal{U} \rightarrow \mathbb{R}\cup\{+\infty\}$ is defined by
\begin{equation}\label{J0}
\begin{split}
  J_0(\varphi)&\coloneqq \sigma_1 D_{\rm mean}(\varphi)+\sigma_2 RM(\varphi)+ \sigma_3\mu_{\rm norm}D_{T}(\varphi)\\
  &\quad+\gamma \big( \operatorname{Per}(\{\varphi=1\}) + \int\limits_{\partial S} \varphi_{|_{\partial S}}~\mathrm{d}\mathcal{H}^{1}\big).
\end{split}
\end{equation}
The first two objectives in the optimization problem \eqref{min:sharp} are the average bending rigidity
\begin{equation}\label{Dmean}
   D_{\textrm{mean}}(\varphi) = \frac{\mu_{\rm norm}(3\lambda_{\rm norm}+2\mu_{\rm norm})}{\lambda_{\rm norm}+\mu_{\rm norm}} \Bigg(\frac{D_{x_2}^{u^\varphi}+D_{x_3}^{u^\varphi}}{2}\Bigg)
\end{equation}
and the non-symmetric part 
\begin{equation}\label{RM}
    RM(\varphi)=\frac{\mu_{\rm norm}(3\lambda_{\rm norm}+2\mu_{\rm norm})}{\lambda_{\rm norm}+\mu_{\rm norm}} \sqrt{\frac{(D^{u^\varphi}_{x_2}- D^{u^\varphi}_{x_3})^{2}}{4}+(D^{u^\varphi}_{x_2x_3})^{2}}\,.
\end{equation}
The remaining objective in the optimization problem \eqref{min:sharp} is the torsional rigidity $D_{T}$ represented, as in \eqref{Dt}, via Prandtl's stress functions.

The last terms that appear in the definition of $J_0$ \eqref{J0} are the regularizing term $\operatorname{Per}(\{\varphi=1\})$, which is the perimeter of the regions inside $S$ where only the stiffer material is present, the trace of $\varphi$ in the sense of \cite[Theorem 3.87]{ambrosio2000functions}, $\varphi_{|_{\partial S}}$, and $\sigma_1,\sigma_2,\sigma_3,\gamma \in \mathbb{R}$, which are weighting factors.

 With different choices of $\sigma_1$ and $\sigma_2$, we can optimize the minimum bending rigidity $D_{\rm min}$ (when $\sigma_1=-\sigma_2$), or the maximum bending rigidity $D_{\rm max}$ (when $\sigma_1=\sigma_2$). We refer to Section \ref{numerics} for further examples of optimizations. 

The moments of inertia and the product of inertia introduced in \eqref{Dmean} and \eqref{RM}, namely
\begin{align*}
    D_{x_2}^{u^\varphi}=\int\limits_{S}u^\varphi \hat{x}_2^{2}~ \mathrm{d}x_2\mathrm{d}x_3,
    D_{x_3}^{u^\varphi}=\int\limits_{S}u^\varphi \hat{x}_3^{2} ~ \mathrm{d}x_2\mathrm{d}x_3,
    D_{x_2x_3}^{u^\varphi}= \int\limits_{S}u^\varphi \hat{x}_2\hat{x}_3~ \mathrm{d}x_2\mathrm{d}x_3,  
\end{align*}
refer to the following proper choice of the coordinate system $(\hat{x}_2,\hat{x}_3)$
\begin{align*}
    \hat{x}_2=x_2-\frac{\int_S\limits u^\varphi x_2~\mathrm{d}x_2x_3}{\int_S\limits u^\varphi~\mathrm{d}x_2\mathrm{d}x_3},\quad
    \hat{x}_3=x_3-\frac{\int_S\limits u^\varphi x_3~\mathrm{d}x_2x_3}{\int_S\limits u^\varphi~\mathrm{d}x_2\mathrm{d}x_3}\,.
\end{align*}
In fact, unlike Section~\ref{sec:mainTh} where the coordinate system was $(x_2,x_3)=(\hat{x}_2,\hat{x}_3)$, in this case we have to work with (arbitrary) coordinate systems, which involve additional terms resulting from the nature of the shape optimization problem.

From a pure mathematical point of view, solutions to problem \eqref{min:sharp} are equivalent to minimizers of the extended functional $\mathcal{J}_0:L^1(S)\to\mathbb{R}\cup\{+\infty\}$, defined by
\begin{align}\label{J0L1}
  \mathcal{J}_0(\varphi)  &:=
  \begin{cases}
      J_0(\varphi) &\quad\text{in }\mathcal{U}, \\
    +\infty &\quad\text{in }L^1(S)\setminus\mathcal{U}.
  \end{cases}
\end{align}
In the last result of this section, Corollary \ref{lastcorollary}, we will show that the existence of solutions to problem \eqref{min:sharp} can be obtained as an application of the fundamental theorem of $\Gamma$-convergence \cite[Theorem 7.8]{dal1993introduction}.
Similarly to the original case of Modica and Mortola \cite{modica1987gradient}, it will be crucial the presence of the regularizing term 
\begin{equation}\label{regterm}
    \operatorname{Per}(\{\varphi=1\})+ \int\limits_{\partial S} \varphi_{|_{\partial S}}~\mathrm{d}\mathcal{H}^{1}.
\end{equation}
The reason for this will be clarified in the next two subsections.


\subsection{The phase field approach}\label{sub3.2}

To solve numerically problem~\eqref{min:sharp} (see Section \ref{numerics} for details), it is convenient to replace and approximate both the set  $\mathcal{U}$ and the solutions $\varphi$, respectively, with the Sobolev space
\begin{align*}
  \mathcal{U}_{\rm ad} \coloneqq \Bigg\{ w \in W^{1,2}_0(S) \colon 0\leq w \leq 1, \int\limits_{S} w(x_2,x_3)~\mathrm{d}x_2\mathrm{d}x_3 = m_1|S| \Bigg\},
\end{align*}
with $m_1\in (0,1)$, which is the set of admissible functions in the phase field approach, and the solutions (depending on $\epsilon$) of the approximating problems
\begin{align}\label{min:diff}
    \min_{\varphi \in \mathcal{U}_{\rm ad}}J_\epsilon(\varphi)\,,\quad\epsilon>0\, , 
\end{align}
where $J_\epsilon\colon \mathcal{U}_{\rm ad} \rightarrow \mathbb{R}\cup\{+\infty\}$ is defined by
\begin{equation}\label{Jeps}
    J_\epsilon(\varphi)\coloneqq \sigma_1 D_{\rm mean}(\varphi)+ \sigma_2 RM(\varphi)+ \sigma_3\mu_{\rm norm}D_{T}(\varphi)+\frac{\gamma}{c_0}E_\epsilon(\varphi)\,.
\end{equation}
In \eqref{Jeps}, the regularizing term \eqref{regterm} of problem~\eqref{min:sharp}, is approximated by a sequence of Ginzburg-Landau energies $E_\epsilon:\mathcal{U}_{\rm ad}\to\mathbb{R}_0^+
\cup\{+\infty\}$, defined by
\begin{equation}\label{Eeps}
    E_\epsilon(\varphi):=\int\limits_{S} \left(\frac{\epsilon}{2}|\nabla \varphi|^{2} + \frac{1}{\epsilon} F(\varphi)\right) ~\mathrm{d}x_2\mathrm{d}x_3,
\end{equation}
where a prototype of the double obstacle potential $F:\mathcal{U}_{\rm ad}\to\mathbb{R}_0^+
\cup\{+\infty\}$ is
\begin{align}\label{double_well}
    F(\varphi)= \frac{1}{4}\varphi^2(1-\varphi)^2\quad\text{for any }\varphi\in\mathcal{U}_{\rm ad}\,.
\end{align}
\begin{remark}
In accordance with the classical literature (see e.g. \cite{modica1987gradient}), the presence of the normalizing constant $c_0$ in \eqref{Jeps} is intended to retrieve a normalized version of the regularizing term, passing to the limit.
\end{remark}
As in the previous case, the solutions to \eqref{min:diff} are critical points (minimizers) for the sequence of extended functionals $\mathcal{J}_\epsilon:L^1(S)\to\mathbb{R}\cup\{+\infty\}$, defined by
\begin{align}\label{JepsL1}
  \mathcal{J}_\epsilon(\varphi)  &\coloneqq
  \begin{cases}
      J_\epsilon(\varphi)\quad & \text{in } \mathcal{U_{\rm ad}}, \\
    +\infty\quad & \text{in }L^1(S)\setminus\mathcal{U_{\rm ad}}.
  \end{cases}
\end{align}

The first step in the phase field approach is to prove the regularity of the objectives in the optimization problem \eqref{min:diff} in the right topology, i.e. the continuity of $D_{\rm mean}$, $RM$ and $D_T$ in the strong topology of $L^1(S)$.

Later, in Theorem \ref{existenceminimizers}, we will show the existence of a solution to any minimization problem \eqref{min:diff}.
\begin{lemma}\label{continuity}
Let $D_{T}$, $D_{\rm mean}$ and $ RM$ be the objectives in problem~\eqref{min:sharp}, defined respectively in \eqref{Dt}, \eqref{Dmean} and \eqref{RM}.
Then, the functional
\begin{align*}
\sigma_1 D_{\rm mean}(\varphi)+\sigma_2RM(\varphi)+\mu_{\rm norm}\sigma_3 D_{T}(\varphi)
\end{align*}
for any $\varphi\in\Big\{w\in L^1(S):\,w\in[0,1]\text{ a.e. in }S\Big\}$ is continuous in the strong topology of $L^{1}(S)$, for any possible choice of $\sigma_1,\sigma_2,\sigma_3 \in \mathbb{R}$.
\end{lemma}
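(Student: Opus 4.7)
Throughout I consider a sequence $\varphi_n\to\varphi$ in $L^1(S)$ with $\varphi_n,\varphi\in[0,1]$ a.e., and prove separately the continuity of $D_{\rm mean}$, $RM$ (the ``algebraic'' part) and of $D_T$ (the ``PDE'' part). Since $u^{\varphi_n}=(1-c)\varphi_n+c$ differs from $\varphi_n$ only by an affine transformation, $u^{\varphi_n}\to u^\varphi$ in $L^1(S)$ as well, and $c\le u^{\varphi_n}\le 1$ a.e.

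For the bending terms I first note that, as $S$ is bounded, $x_2,x_3\in L^\infty(S)$, so the integrals $\int_S u^{\varphi_n} x_2^\alpha x_3^\beta\,\mathrm{d}x_2\mathrm{d}x_3$ converge for every monomial. Because $\int_S u^{\varphi_n}\ge c|S|>0$, the centroid shifts
\[
\bar{x}_2(\varphi_n)=\frac{\int_S u^{\varphi_n} x_2\,\mathrm{d}x_2\mathrm{d}x_3}{\int_S u^{\varphi_n}\,\mathrm{d}x_2\mathrm{d}x_3},\qquad \bar{x}_3(\varphi_n)=\frac{\int_S u^{\varphi_n} x_3\,\mathrm{d}x_2\mathrm{d}x_3}{\int_S u^{\varphi_n}\,\mathrm{d}x_2\mathrm{d}x_3},
\]
converge to $\bar{x}_2(\varphi),\bar{x}_3(\varphi)$, so $\hat{x}_2^{(n)},\hat{x}_3^{(n)}$ converge uniformly on $S$ to $\hat{x}_2,\hat{x}_3$. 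Expanding $(\hat{x}_2^{(n)})^2$, $(\hat{x}_3^{(n)})^2$, $\hat{x}_2^{(n)}\hat{x}_3^{(n)}$ into monomials with coefficients depending continuously on $\bar x_2(\varphi_n),\bar x_3(\varphi_n)$, I conclude $D_{x_2}^{u^{\varphi_n}}, D_{x_3}^{u^{\varphi_n}},D_{x_2x_3}^{u^{\varphi_n}}\to D_{x_2}^{u^\varphi}, D_{x_3}^{u^\varphi}, D_{x_2x_3}^{u^\varphi}$. Since the map sending these three real numbers to $D_{\rm mean}(\varphi)$ and $RM(\varphi)$ is continuous (the argument of the square root in \eqref{RM} is always non-negative and the square root is continuous on $[0,\infty)$), both $D_{\rm mean}$ and $RM$ are continuous in the $L^1$ topology.

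For $D_T$ I exploit the Prandtl representation \eqref{Dt}. Denote $\Phi_n\coloneqq\Phi^{\varphi_n}\in\mathcal{H}$ the unique solution of \eqref{eq:prandtl_phi} with weight $1/u^{\varphi_n}$. Testing against $\Phi_n$ and using $1/u^{\varphi_n}\ge 1$ together with Poincaré's inequality on $W^{1,2}_0(\tilde S)$, I obtain a uniform bound $\|\Phi_n\|_{W^{1,2}(\tilde S)}\le C$. Hence, up to subsequence, $\Phi_n\rightharpoonup \Phi^\ast$ weakly in $W^{1,2}(\tilde S)$ and strongly in $L^2(\tilde S)$, with $\Phi^\ast\in\mathcal{H}$ (the constraint $\nabla w=0$ on $\tilde S\setminus S$ passes to the weak limit). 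For any fixed test function $w\in\mathcal{H}$, I have $1/u^{\varphi_n}\to 1/u^\varphi$ a.e. (along a further subsequence extracted from $L^1$ convergence) and $0\le 1/u^{\varphi_n}\le 1/c$, so dominated convergence gives $\frac{1}{u^{\varphi_n}}\nabla w \to \frac{1}{u^\varphi}\nabla w$ strongly in $L^2(\tilde S;\mathbb{R}^2)$. Coupling this strong convergence with the weak convergence $\nabla\Phi_n\rightharpoonup\nabla\Phi^\ast$ in $L^2$ allows me to pass to the limit in \eqref{eq:prandtl_phi} and conclude that $\Phi^\ast$ is a solution of the same equation with weight $1/u^\varphi$. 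By the Riesz representation-based uniqueness recalled right after \eqref{eq:prandtl_phi}, $\Phi^\ast=\Phi^\varphi$; since the limit is independent of the subsequence, Urysohn's principle yields convergence of the full sequence. Finally, strong $L^2$ (hence $L^1$) convergence of $\Phi_n$ to $\Phi^\varphi$ on the bounded set $\tilde S$ gives $D_T(\varphi_n)=2\int_{\tilde S}\Phi_n\to 2\int_{\tilde S}\Phi^\varphi=D_T(\varphi)$.

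Continuity of the full combination then follows by linearity, for every $\sigma_1,\sigma_2,\sigma_3\in\mathbb{R}$. The main technical step is the PDE part: showing that the $\varphi$-dependence of the Prandtl solution is stable under $L^1$ perturbation. The key ingredient making this work is the uniform lower bound $u^{\varphi_n}\ge c>0$, which supplies both the uniform coercivity used for the a priori $W^{1,2}$ bound and the uniform $L^\infty$ bound on $1/u^{\varphi_n}$ needed to invoke dominated convergence when passing to the limit in the bilinear form.
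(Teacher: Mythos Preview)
Your proof is correct and follows essentially the same strategy as the paper: continuity of the bending terms is reduced to the boundedness of $S$, while for $D_T$ you obtain a uniform $W^{1,2}$ bound on the Prandtl functions $\Phi_n$ via Poincar\'e, extract a weak limit, pass to the limit in the bilinear form using dominated convergence on the coefficient $1/u^{\varphi_n}$, identify the limit by uniqueness, and upgrade to the full sequence by a sub-subsequence argument. The only notable difference is that you spell out the dependence of $\hat x_2,\hat x_3$ on the $\varphi$-dependent centroid and verify its continuity explicitly (using $\int_S u^{\varphi_n}\ge c|S|>0$), whereas the paper simply asserts that the bending part ``directly follows from the boundedness of the cross-section $S$''; your extra care here is warranted but does not change the argument.
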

\begin{proof}
We first notice that the continuity of 
\[
\sigma_1 D_{\rm mean}(\varphi)+\sigma_2RM(\varphi)
\]
in the strong topology of $L^1(S)$ directly follows from the boundedness of the cross-section $S$, for any possible choice of the constants $\sigma_1$ and $\sigma_2$.
We then just focus on the torsional rigidity term $D_T$ and the Prandtl's stress function $\Phi\in\mathcal{H}$, associated to it in the sense of \eqref{Dt}.

Let us fix $\varphi_{k}$ and $\varphi_{0}$ in $L^{1}(S)$ (and the corresponding functions $u^{\varphi_{k}}$ and $u^\varphi_{0}$) and assume that the sequence $(\varphi_{k})_k$ strongly converges to $\varphi_{0}$ in $L^{1}(S)$, as $k \rightarrow\infty$.
Moreover, for any $k\in\mathbb{N}$ denote $\Phi_k\in\mathcal{H}$ the Prandtl's stress function associated with $\varphi_k$, in the sense of \eqref{eq:prandtl_phi} (with $u=u^{\varphi_k}$).

 To conclude, we show the existence of a limit Prandtl's stress function $\Phi_0$, which is still a solution of \eqref{eq:prandtl_phi} (with $u=u^{\varphi_0}$) and to which the sequence $(\Phi_k)_k$ strongly converges in $L^1(\tilde{S})$. 

By the Poincar\'e inequality, the sequence $(\Phi_k)_k$ is bounded in $W^{1,2}_0(\tilde{S})$ and, by reflexivity, there exist a subsequence of $(\Phi_k)_k$, denoted by $(\Phi_{k_j})_{k_j}$, and a limit $\Phi_0\in W^{1,2}_0(\tilde{S})$ such that $\Phi_{k_j}$ weakly converges to $\Phi_0$ in $W^{1,2}_0(\tilde{S})$. 

Note that also the sequence $\left(\frac{1}{u^{\varphi_{k_j}}}\Phi_{k_j}\right)_{k_j}$ is bounded in $W^{1,2}_0(\tilde{S})$ and, up to a further (not-relabeled) subsequence, the dominated convergence theorem ensures that
\begin{align*}
    \Bigg|\int\limits_{\tilde{S}} \frac{1}{u^{\varphi_{k_j}}}\nabla \Phi_{k_j} \cdot \nabla v~\mathrm{d}x_2\mathrm{d}x_3&- \int\limits_{\tilde{S}} \frac{1}{u^{\varphi_0}}   \nabla \Phi_0 \cdot  \nabla v~\mathrm{d}x_2\mathrm{d}x_3\Bigg| \\
    &\leq \int\limits_{\tilde{S}} \left|\frac{1}{u^{\varphi_{k_j}}}-\frac{1}{u^{\varphi_0}}\right||\nabla \Phi_{k_j} \cdot  \nabla v|~\mathrm{d}x_2\mathrm{d}x_3 \\
    &\quad+\int\limits_{\tilde{S}} \frac{1}{u^{\varphi_0}}\left|(\nabla \Phi_{k_j}-\nabla \Phi_0) \cdot  \nabla v\right|~\mathrm{d}x_2\mathrm{d}x_3\to 0
\end{align*}
as ${k_j}\to\infty$, for any $v \in \mathcal{H}$.
Thus, $\Phi_0$ is a solution to problem~\eqref{eq:prandtl_phi} (with $u=u^{\varphi_0}$), and belongs to the set $\mathcal{H}$.

By applying the previous argument to any subsequence of the starting sequence $(\Phi_k)_k$, we find that every subsequence of $(\Phi_k)_k$ has a subsequence weakly convergent in $\mathcal{H}$ to the unique solution of~\eqref{eq:prandtl_phi}. 

 Then, we get the weak convergence of the whole sequence $(\Phi_k)_k$ in $\mathcal{H}$ and, by Rellich theorem, the strong convergence to $\Phi_0$ in $L^{1}(\tilde{S})$ and the thesis readily follows. 
\end{proof}
With Lemma \ref{continuity} in hand, we are now in a position to prove the main result of this section, which ensures the existence of solutions to each problem \eqref{min:diff}.
\begin{theorem}\label{existenceminimizers}
There exists at least a solution to \eqref{min:diff}, for any $\epsilon>0$.
\end{theorem}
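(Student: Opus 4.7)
The plan is to apply the direct method in the calculus of variations, exploiting the regularizing term $E_\epsilon$ to obtain compactness and Lemma \ref{continuity} to handle the rigidity objectives.

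First, I would verify that $\inf_{\mathcal{U}_{\rm ad}} J_\epsilon > -\infty$. Since $\varphi \in \mathcal{U}_{\rm ad}$ satisfies $0\leq \varphi \leq 1$ a.e., the associated function $u^\varphi$ from \eqref{eq:u_phi} takes values in $[c,1]$. Consequently the moments $D_{x_2}^{u^\varphi}$, $D_{x_3}^{u^\varphi}$, $D_{x_2 x_3}^{u^\varphi}$ are uniformly bounded (the cross-section $S$ is bounded), so $D_{\rm mean}(\varphi)$ and $RM(\varphi)$ are uniformly bounded on $\mathcal{U}_{\rm ad}$. For the torsional rigidity, the variational characterization \eqref{eq:prandtl_phi} combined with $\frac{1}{u^\varphi} \in [1, 1/c]$ gives a uniform $W^{1,2}_0(\tilde{S})$ bound on the Prandtl stress function $\Phi$ by Poincaré's inequality, and hence a uniform bound on $D_T(\varphi)$ via \eqref{Dt}. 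Since $E_\epsilon(\varphi) \geq 0$, we obtain $J_\epsilon \geq -C$ for a constant $C$ depending on $\sigma_1, \sigma_2, \sigma_3, \gamma, \epsilon, S, c$.

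Next, take a minimizing sequence $(\varphi_n) \subset \mathcal{U}_{\rm ad}$. Combining the previous lower bound on the rigidity terms with $J_\epsilon(\varphi_n) \leq \inf J_\epsilon + 1$ for large $n$, we deduce that $E_\epsilon(\varphi_n)$ is bounded uniformly in $n$. Since $\epsilon > 0$ is fixed, this yields a uniform bound on $\|\nabla \varphi_n\|_{L^2(S)}$, while the pointwise constraint $0 \leq \varphi_n \leq 1$ gives $\|\varphi_n\|_{L^2(S)}$ bounded. Hence $(\varphi_n)$ is bounded in $W^{1,2}_0(S)$, so after extracting a subsequence (not relabelled), $\varphi_n \rightharpoonup \varphi^*$ in $W^{1,2}_0(S)$, strongly in $L^p(S)$ for $p \in [1,2)$ by Rellich–Kondrachov, and pointwise a.e. Passing to the limit in $0 \leq \varphi_n \leq 1$ and in the mass constraint $\int_S \varphi_n = m_1 |S|$ shows $\varphi^* \in \mathcal{U}_{\rm ad}$.

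Finally, I would verify lower semicontinuity of $J_\epsilon$ along this subsequence. The Dirichlet term $\int_S \frac{\epsilon}{2} |\nabla \varphi|^2$ is weakly lower semicontinuous in $W^{1,2}_0(S)$ by convexity, while $\int_S \frac{1}{\epsilon} F(\varphi_n)\, dx$ converges to $\int_S \frac{1}{\epsilon} F(\varphi^*)\, dx$ by the dominated convergence theorem, since $F$ is continuous, $\varphi_n \to \varphi^*$ a.e., and $0 \leq F(\varphi_n) \leq F_{\max}$. Thus $E_\epsilon$ is weakly lsc. The bending and torsional rigidity terms $\sigma_1 D_{\rm mean} + \sigma_2 RM + \sigma_3 \mu_{\rm norm} D_T$ are continuous with respect to strong $L^1(S)$ convergence by Lemma \ref{continuity}, which applies since the constraint $0 \leq \varphi_n \leq 1$ places the sequence in the admissible class of that lemma. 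Combining, $J_\epsilon(\varphi^*) \leq \liminf_n J_\epsilon(\varphi_n) = \inf_{\mathcal{U}_{\rm ad}} J_\epsilon$, proving that $\varphi^*$ is a minimizer.

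The main subtlety will be checking the uniform lower bound for arbitrary signs of $\sigma_1, \sigma_2, \sigma_3$; this is handled precisely because $u^\varphi$ is uniformly bounded away from both $0$ and infinity, which makes all three rigidity functionals uniformly bounded (not merely bounded from one side) on $\mathcal{U}_{\rm ad}$. Everything else is a routine application of direct method machinery, relying on the coercivity contributed by $E_\epsilon$ and on Lemma \ref{continuity} for the rigidity terms.
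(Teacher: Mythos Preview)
Your proof is correct and follows essentially the same approach as the paper: both use the direct method, obtain a uniform lower bound on $J_\epsilon$ from the boundedness of the rigidity objectives on $\mathcal{U}_{\rm ad}$, extract a weakly convergent subsequence in $W^{1,2}_0(S)$ from the gradient term in $E_\epsilon$, and conclude via lower semicontinuity of $E_\epsilon$ together with Lemma~\ref{continuity} for the rigidity terms. Your treatment is in fact slightly more explicit in justifying the uniform bound on $D_T$ via Poincar\'e's inequality and in handling the potential term by dominated convergence.
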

\begin{proof}
The existence of minimizers is shown by means of the direct method in the calculus of variations.

Fix $\epsilon>0$ and $\sigma_1,\sigma_2,\sigma_3\in\mathbb{R}$, consider the functional $J_\epsilon$ introduced in \eqref{Jeps} and, for any $\varphi\in\mathcal{U}_{\rm ad}$, let $\Phi\in\mathcal{H}$ be the Prandtl's stress function associated with $\varphi$, in the sense of \eqref{eq:prandtl_phi}.
Then, $\varphi$ is bounded on $\mathcal{U}_{\rm ad}$ and, by the boundedness of $S$, there exist positive constants $C_1,C_2$, only depending on $S,\sigma_1,\sigma_2$ and $\sigma_3$, such that 
\begin{align*}
    \sigma_1 D_{\rm mean}(\varphi)\pm \sigma_2 RM(\varphi)&> -C_1\quad\text{and}\\
    \sigma_3 D_{T}(\varphi)&> -C_2\quad\text{for any }\varphi\in\mathcal{U}_{\rm ad}.
\end{align*}
Therefore, since $F$ is a double obstacle potential, there is a control from below on $J_\epsilon$ by means of $\epsilon\|\varphi\|_{W^{1,2}_0(S)}$, which yields the coercivity of $J_\epsilon$ in $L^1(S)$.

Let $(\varphi_k)_{k}$ be a minimizing sequence of $J_\epsilon$, meaning that $(J_\epsilon(\varphi_k))_k$ converges to the infimum of $J_\epsilon$ in $\mathcal{U}_{\rm ad}$, as $k$ goes to $\infty$.
Then, $\sup_{k}J_\epsilon(\varphi_k) <\infty$ and so
\begin{align*}
    \sup_{k \in \mathbb{N}}\|\nabla \varphi_k\|_{L^2(S)} < \infty,
\end{align*}
that is, $(\varphi_k)_k$ is bounded in $W^{1,2}_0(S)$.

Therefore, by reflexivity, there exists $\varphi_{0}\in W^{1,2}_0(S)$ such that, up to a not relabelled subsequence, $(\varphi_k)_k$ converges to $\varphi_{0}$ weakly in $W^{1,2}_0(S)$, strongly in $L^{2}(S)$ and a.e. in $S$ (this last convergence is actually pointwise, being $(\varphi_k)_k$ a minimizing sequence).
By the pointwise convergence, $\varphi_0\in[0,1]$ and, as a consequence of the $L^2$-convergence, $\varphi_0$ satisfies the mass constraint
\[
\int\limits_{S} \varphi_0(x_2,x_3)~\mathrm{d}x_2\mathrm{d}x_3 =m_1|S|,
\]
so that $\varphi_0 \in \mathcal{U}_{\rm ad}$. 

Moreover, by the continuity of $F$ and the lower semicontinuity of the norm $\|\cdot\|_{W^{1,2}_0(S)}$, the Ginzburg-Landau energy $E_\epsilon$ is sequentially lower semicontinuous and, by Lemma~\ref{continuity}, the functional $J_\epsilon$ is lower semicontinuous too.

The thesis then follows as a consequence of the Weierstrass Theorem (direct method in the calculus of variations, see e.g. \cite[Theorem 1.15]{dal1993introduction}).
\end{proof}


\subsection{Sharp interface limit}\label{sub3.3}

We conclude this section by showing that the sequence of functionals $\mathcal{J}_\epsilon$, defined in \eqref{JepsL1},  $\Gamma$-converges to the functional $\mathcal{J}_0$, introduced in \eqref{J0L1}, in the strong topology of $L^1(S)$.
As a consequence of our Modica-Mortola-type Theorem \ref{thm:gamma_conv}, we will finally show in Corollary \ref{lastcorollary} the existence of a solution for the minimization problem \eqref{min:sharp}, as a consequence of the fundamental theorem of $\Gamma$-convergence \cite[Theorem 7.8]{dal1993introduction}.

Having defined the set of admissible functions $\mathcal{U}_{\rm ad}$ by imposing homogeneous Dirichlet boundary conditions on the phase field variable, we follow the ideas of~\cite{huttl2022phase} in order to prove the $\Gamma$-convergence Theorem \ref{thm:gamma_conv}.
They consist of constructing the recovery sequence in the spirit of Modica-Mortola~\cite{modica1987gradient} and Sternberg~\cite{sternberg1988effect}, and by using an additional cut-off procedure, as in~\cite{bourdin2003design}.

Note that the corresponding construction of the recovery sequences in~\cite{huttl2022phase} represents a special case of the fundamental procedure derived in~\cite{owen1990minimizers} for more general Dirichlet boundary conditions imposed on the phase field variables.

\begin{theorem}\label{thm:gamma_conv}
Let $\mathcal{J}_\epsilon,\mathcal{J}_0:L^1(S)\to\mathbb{R}\cup\{+\infty\}$, $\epsilon>0$, be the functionals defined in \eqref{JepsL1} and in \eqref{J0L1}, respectively. 
Then,
\[
(\mathcal{J}_\epsilon)_\epsilon\quad \Gamma\text{-converges to }\mathcal{J}_0
\]
in the strong topology of $L^1(S)$, as $\epsilon$ goes to $0$.
\end{theorem}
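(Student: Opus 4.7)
The overall strategy is to separate the rigidity objectives from the Ginzburg--Landau regularization and then apply a Modica--Mortola-type argument, augmented with the Owen--Sternberg cut-off near the Dirichlet boundary.

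\textbf{Reduction.} By Lemma \ref{continuity}, the map $\varphi \mapsto \sigma_1 D_{\rm mean}(\varphi) + \sigma_2 RM(\varphi) + \sigma_3 \mu_{\rm norm} D_T(\varphi)$ is continuous on $\{w \in L^1(S) : 0 \le w \le 1\text{ a.e.}\}$ in the strong $L^1$-topology, and this set contains both $\mathcal{U}$ and $\mathcal{U}_{\rm ad}$. Since $\Gamma$-convergence is stable under continuous perturbations, the problem reduces to showing that $\frac{1}{c_0}E_\epsilon$, extended by $+\infty$ outside $\mathcal{U}_{\rm ad}$, $\Gamma$-converges in $L^1(S)$ to $\operatorname{Per}(\{\varphi=1\}) + \int_{\partial S}\varphi_{|_{\partial S}}\, d\mathcal{H}^1$, extended by $+\infty$ outside $\mathcal{U}$.

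\textbf{Liminf inequality.} Let $\varphi_\epsilon \to \varphi_0$ in $L^1(S)$; assume $\liminf_\epsilon E_\epsilon(\varphi_\epsilon) < \infty$, otherwise there is nothing to prove. Setting $H(s) := \int_0^s \sqrt{2F(t)}\, dt$ (so that $c_0 = H(1)$), Young's inequality gives
\begin{equation*}
E_\epsilon(\varphi_\epsilon) \ge \int_S |\nabla (H\circ\varphi_\epsilon)|\, dx_2\, dx_3.
\end{equation*}
Since $\varphi_\epsilon \in W^{1,2}_0(S)$ with values in $[0,1]$, the zero extension $\tilde\varphi_\epsilon$ belongs to $W^{1,2}(\mathbb{R}^2)$ and the above bound makes $H(\tilde\varphi_\epsilon)$ uniformly bounded in $BV(\mathbb{R}^2)$. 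Up to a subsequence, $\tilde\varphi_\epsilon \to \tilde\varphi_0 \in BV(\mathbb{R}^2;\{0,1\})$ in $L^1_{\rm loc}(\mathbb{R}^2)$, with $\tilde\varphi_0 = \varphi_0$ on $S$ and $\tilde\varphi_0 = 0$ on $\mathbb{R}^2\setminus\bar{S}$. Lower semicontinuity of the total variation together with the trace decomposition of the perimeter yields
\begin{equation*}
\liminf_\epsilon \tfrac{1}{c_0}E_\epsilon(\varphi_\epsilon) \ge \operatorname{Per}(\{\tilde\varphi_0 = 1\};\mathbb{R}^2) = \operatorname{Per}(\{\varphi_0=1\}; S) + \int_{\partial S} \varphi_{0|_{\partial S}}\, d\mathcal{H}^1,
\end{equation*}
and the mass constraint passes to the limit by the $L^1$-convergence, so $\varphi_0 \in \mathcal{U}$.

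\textbf{Limsup inequality.} Given $\varphi_0 \in \mathcal{U}$, the recovery sequence is built à la Modica--Mortola--Sternberg: near the reduced boundary $\Sigma = \partial^*\{\varphi_0 = 1\} \cap S$, insert the optimal one-dimensional transition profile $q$ of $F$ (solving $q' = \sqrt{2F(q)}$), rescaled by $\epsilon$ and parametrised by the signed distance to $\Sigma$; away from $\Sigma$, set $\varphi_\epsilon$ equal to $0$ or $1$ according to $\varphi_0$. To obtain a sequence in $\mathcal{U}_{\rm ad}$, apply the cut-off procedure of \cite{bourdin2003design,huttl2022phase,owen1990minimizers}: multiply by a Lipschitz function $\eta_\epsilon$ which transitions from $1$ to $0$ within a boundary layer of width $O(\epsilon)$ along $\partial S$, thereby enforcing the Dirichlet condition. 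This cut-off produces an additional half-transition profile precisely where $\varphi_{0|_{\partial S}} = 1$, whose energy contribution is computed directly and, in the limit $\epsilon \to 0$, equals $\int_{\partial S}\varphi_{0|_{\partial S}}\, d\mathcal{H}^1$. A final $O(\epsilon)$ volume-preserving perturbation enforces the mass constraint exactly without altering the limit energy; continuity of the $D$-objectives along this sequence is then immediate from Lemma \ref{continuity}.

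\textbf{Main obstacle.} The delicate point is the reconciliation of the homogeneous Dirichlet boundary condition on the phase field variable with the presence of the boundary trace term in the limit functional. The cut-off near $\partial S$ must create exactly the right amount of interfacial energy on $\partial S \cap \{\varphi_0 = 1\}$ --- no more, no less --- while not inflating the bulk interface in $S$. This is exactly the scope of the Owen--Sternberg construction \cite{owen1990minimizers} as adapted to the Dirichlet setting in \cite{huttl2022phase}; once it is invoked, the remaining mass-correction and perimeter-accounting steps are standard.
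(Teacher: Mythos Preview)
Your proposal is correct and follows essentially the same route as the paper: both separate the continuous rigidity part via Lemma~\ref{continuity} and then establish the $\Gamma$-convergence of the constrained Ginzburg--Landau energies with homogeneous Dirichlet data by invoking the Owen--Sternberg/H\"uttl et al.\ construction for the boundary trace term together with a mass-constraint correction. The only difference is presentational---the paper treats the Modica--Mortola part as a black box by citing \cite{huttl2022phase,owen1990minimizers,garcke2021phase} directly, whereas you unpack the liminf (via zero extension and BV lower semicontinuity) and the limsup (via optimal profile plus boundary cut-off) explicitly.
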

\begin{proof}
Denote $\mathbbm{1}_A$ the indicator function of any set $A\subset L^1(S)$, that is,
\[
\mathbbm{1}_A(x):=
\begin{cases}
    0&\text{ if } x\in A,\\
    +\infty&\text{ otherwise}.
\end{cases}
\]
Then, we can rewrite any functional $\mathcal{J}_\epsilon$ in \eqref{JepsL1} as
\[
\mathcal{J}_\epsilon(\varphi)=D_r(\varphi)+\frac{\gamma}{c_0}E_\epsilon(\varphi)+\mathbbm{1}_{K}\quad\text{for any }\varphi\in L^1(S)\text{ and }\epsilon>0,
\]
where
\begin{align*}
    D_r(\varphi)&\coloneqq\sigma_1 D_{\rm mean}(\varphi)+ \sigma_2 RM(\varphi)+\sigma_3\mu_{\rm norm} D_{T}(\varphi)\quad\text{for any }\varphi\in\mathcal{U}_{\rm ad},\\
    K &\coloneqq \Bigg\{ \varphi \in L^{1}(S)\colon \int\limits_{S} \varphi(x_2,x_3)~\mathrm{d}x_2\mathrm{d}x_3 = m_1|S| \Bigg\}.
\end{align*}

By \cite[Theorem 3.17]{huttl2022phase} and \cite[Theorem 2.1]{owen1990minimizers}, the sequence of energies $\mathcal{E}_\epsilon:L^1(S)\to\mathbb{R}\cup\{+\infty\}$, defined by
\begin{align*}
    \mathcal{E}_{\epsilon}(\varphi)&:=E_\epsilon(\varphi)+\mathbbm{1}_{W^{1,2}_0(S;[0,1])}= 
    \begin{cases}
        E_\epsilon(\varphi) &\quad\text{in }W^{1,2}_0(S;[0,1]),\\
    +\infty&\quad\text{in }L^1(S)\setminus W^{1,2}_0(S;[0,1]),
    \end{cases}
\end{align*}
$\Gamma$-converges in the strong topology of $L^{1}(S)$, as $\epsilon \rightarrow 0$, to the limit functional $\mathcal{E}_0:L^1(S)\to\mathbb{R}\cup\{+\infty\}$, which can be represented by
\begin{align*}
    \mathcal{E}_{0}(\varphi) \coloneqq
    \begin{cases}
        c_0\Big(\operatorname{Per}(\{\varphi=1\}) + \int\limits_{\partial S} \varphi_{|_{\partial S}}~\mathrm{d}\mathcal{H}^{1}\Big) &\quad\text{in }BV(\{0,1\}), \\
    +\infty&\quad\text{in }L^1(S)\setminus BV(\{0,1\}).
    \end{cases}
\end{align*}
Up to a further subsequence, the constructions by \cite[Theorem 4.8]{garcke2021phase} and \cite[Theorem 3.18]{huttl2022phase}, guarantee that 
\[
\left(\frac{\gamma}{c_0}\mathcal{E}_{\epsilon}(\varphi) + \mathbbm{1}_K\right)_\epsilon\quad\Gamma\text{-converges to}\quad\gamma\mathcal{E}_0+\mathbbm{1}_K\,,\quad\text{as }\epsilon\to 0,
\]
and the assertion then follows by Lemma~\ref{continuity} and \cite[Proposition 6.21]{dal1993introduction}.
\end{proof}
As a consequence of the previous result, we finally show that any sequence of minimizers $(\varphi_\epsilon)_\epsilon$ of $(\mathcal{J}_\epsilon)_\epsilon$ strongly converges in $L^1(S)$ to a solution of the sharp interface problem~\eqref{min:sharp}, as $\epsilon$ goes to $0$.
\begin{corollary}\label{lastcorollary}
Let $\varphi_\epsilon$ be a minimizer of $\mathcal{J}_\epsilon$, for any $\epsilon>0$. Then, there exists $\varphi \in \mathcal{U}$, minimizer of $\mathcal{J}_0$, such that, up to subsequences 
\begin{align*}
    \lim_{\epsilon \rightarrow 0}\|\varphi_{\epsilon} - \varphi\|_{L^1(S)}=0\quad\text{and}\quad\lim_{\epsilon \rightarrow 0}\mathcal{J}_{\epsilon}(\varphi_{\epsilon})=\mathcal{J}_0(\varphi).
\end{align*}
\end{corollary}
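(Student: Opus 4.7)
The plan is to deduce the corollary from Theorem~\ref{thm:gamma_conv} by invoking the fundamental theorem of $\Gamma$-convergence \cite[Theorem 7.8]{dal1993introduction}. Two things are required beyond the $\Gamma$-convergence already established: equi-coercivity of the family $(\mathcal{J}_\epsilon)_\epsilon$ in the strong topology of $L^1(S)$, and a uniform energy bound along the sequence of minimizers $(\varphi_\epsilon)_\epsilon$. Once these are in hand, the convergence of minimizers and of minimal values is automatic.

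First I would verify $\mathcal{U} \neq \emptyset$: since $m_1 \in (0,1)$, one can choose a set $A \subset S$ of finite perimeter with $|A| = m_1 |S|$, and $w_0 := \mathbbm{1}_A \in \mathcal{U}$. By the $\limsup$ inequality in Theorem~\ref{thm:gamma_conv}, there exists a recovery sequence $(w_\epsilon)_\epsilon \subset \mathcal{U}_{\rm ad}$ with $w_\epsilon \to w_0$ in $L^1(S)$ and $\mathcal{J}_\epsilon(w_\epsilon) \to \mathcal{J}_0(w_0) < +\infty$. Since $\varphi_\epsilon$ minimizes $\mathcal{J}_\epsilon$, this yields
\begin{equation*}
\sup_{\epsilon>0} \mathcal{J}_\epsilon(\varphi_\epsilon) \leq \sup_{\epsilon>0}\mathcal{J}_\epsilon(w_\epsilon) < +\infty.
\end{equation*}

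Next I would extract from this bound a uniform bound on the Ginzburg--Landau energy $E_\epsilon(\varphi_\epsilon)$. Since $\varphi_\epsilon \in \mathcal{U}_{\rm ad} \subset \{w \in L^\infty(S) : 0 \leq w \leq 1\}$ and $S$ is bounded, the same uniform lower bounds on $\sigma_1 D_{\rm mean} \pm \sigma_2 RM$ and $\sigma_3 \mu_{\rm norm} D_T$ used in the proof of Theorem~\ref{existenceminimizers} apply, giving constants independent of $\epsilon$ such that $\sigma_1 D_{\rm mean}(\varphi_\epsilon)+\sigma_2 RM(\varphi_\epsilon)+\sigma_3\mu_{\rm norm} D_T(\varphi_\epsilon) \geq -C$. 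Combined with the above, this produces $\sup_\epsilon E_\epsilon(\varphi_\epsilon) < +\infty$. The standard Modica--Mortola compactness result (as used in~\cite{modica1987gradient} and in the precise form required by the present setting, e.g.~\cite[Proposition~3]{owen1990minimizers}) then yields a subsequence, not relabelled, and a limit $\varphi \in BV(S;\{0,1\})$ with $\varphi_\epsilon \to \varphi$ strongly in $L^1(S)$. The mass constraint passes to this limit by the $L^1$-convergence, so $\varphi \in \mathcal{U}$.

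Finally, the fundamental theorem of $\Gamma$-convergence applied to the equi-coercive and $\Gamma$-convergent family $(\mathcal{J}_\epsilon)_\epsilon$ gives that every cluster point $\varphi$ of $(\varphi_\epsilon)_\epsilon$ is a minimizer of $\mathcal{J}_0$ and that $\mathcal{J}_\epsilon(\varphi_\epsilon) \to \mathcal{J}_0(\varphi)$, which is exactly the statement of the corollary. The main obstacle is really only the equi-coercivity step, and specifically the fact that the cubic objective terms $D_{\rm mean}$, $RM$, $D_T$ neither help nor seriously hurt coercivity: they are uniformly bounded on $\mathcal{U}_{\rm ad}$ thanks to Lemma~\ref{continuity} together with the boundedness of $S$, so the entire coercive strength must come from the Ginzburg--Landau term $\frac{\gamma}{c_0}E_\epsilon$, which is precisely the content of the Modica--Mortola compactness machinery.
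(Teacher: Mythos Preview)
Your proof is correct and follows essentially the same route as the paper: uniform energy bound for minimizers, uniform bound on the Ginzburg--Landau part via the boundedness of the rigidity terms, Modica--Mortola compactness, and then the fundamental theorem of $\Gamma$-convergence together with Theorem~\ref{thm:gamma_conv}. The paper's proof is terser (it asserts $\sup_\epsilon \mathcal{J}_\epsilon(\varphi_\epsilon)<\infty$ without spelling out the recovery-sequence argument and cites \cite[Proposition~3]{modica1987gradient} rather than \cite{owen1990minimizers} for compactness), but the structure is the same.
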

\begin{proof}
Since the sequence $(\varphi_\epsilon)_\epsilon$ is a sequence of minimizers, then
\begin{align*}
    \sup_{\epsilon>0}\mathcal{J}_\epsilon(\varphi_\epsilon)<\infty
\end{align*}
and, in particular,
\begin{align*}
    \sup_{\epsilon>0}\mathcal{E}_\epsilon(\varphi_\epsilon)<\infty.
\end{align*}

Then, arguing as in the proof of \cite[Proposition 3]{modica1987gradient}, the sequence of Ginzburg-Landau energies $(\mathcal{E}_\epsilon)_\epsilon$ is equicoercive in $L^{1}(S)$, and the thesis then follows as an application of Theorem \ref{thm:gamma_conv} and the fundamental theorem of $\Gamma$-convergence \cite[Theorem 7.8]{dal1993introduction}.
\end{proof}


\section{Numerical implementation}\label{numerics}


We assume in what follows that the reference cross-section $S$ is a simply connected domain.

To find minimizers for the rigidity optimization problem~\eqref{min:diff}, that involve the weighting factors $\gamma,\sigma_1,\sigma_2$ and $\sigma_3$ introduced in the previous sections, we use a steepest descent approach, as in~\cite[Appendix]{wolff2019twist}. 
It consists of computing a time-discrete $L^{2}$-gradient flow of the functional $J_{\epsilon}$, introduced in \eqref{Jeps}, until a stationary state has been reached.
We then discretize the domain $S$ by P1 triangular finite elements and apply a forward discretization in time, for time step $\tau$, and for integer iteration steps $n \geq 1$.
This leads to an artificial time variable $t=\tau \cdot n$, also called pseudo time.

 The implementation of the gradient flow method (C++-code) can be found in~\cite{stevegithub}.


\subsection{Gradient flow dynamics}\label{apx:gradflow} 

If we denote by $(\cdot,\cdot)$ the scalar product in the Hilbert space $L^{2}$, by $t$ the artificial time variable, and by $\frac{\delta J_\epsilon}{\delta \varphi}[v]$ the first variation of $J_\epsilon$ in $\varphi$ in the direction $v$, then the choice of a gradient flow dynamic leads to the pseudo-time stepping approach, given by 
\begin{equation}\label{eq:x2.1}
\begin{split}
    \epsilon(\partial_{t}\varphi,v)_{L^{2}(S)}&=-\frac{\delta J_{\epsilon}}{\delta \varphi}[v]\\
    &= -\left(\sigma_1\frac{\delta D_\mathrm{mean}}{\delta \varphi}[v] + \sigma_2\frac{\delta RM}{\delta \varphi}[v]+\sigma_3\frac{\delta D_T}{\delta \varphi}[v] + \gamma\frac{\delta E_{\epsilon}}{\delta \varphi}[v] \right)
\end{split}
\end{equation}
for any $\epsilon>0$ and for all $v \in W^{1,2}_0(S)$, where $D_T$, $D_\mathrm{mean}$, $RM$ and $E_{\epsilon}$ are respectively defined in \eqref{Dt}, \eqref{Dmean}, \eqref{RM} and \eqref{Eeps}. 
We note that the variation of the perimeter term produces the well-known Allen-Cahn equation.

To solve equation~\eqref{eq:x2.1} by a finite element method, we need to compute the first variations of $D_T$, $D_\mathrm{mean}$ and $RM$ in $\varphi$. 

 The variations of $D_\mathrm{mean}$ and $RM$ can be obtained by direct calculations, recalling that 
\begin{equation*}
\begin{split}
    \frac{\delta D_{x_2}}{\delta \varphi}[v]&=(1-c)\Bigg(\int\limits_{S} \hat{x}_2^{2} v ~ \mathrm{d}x_2\mathrm{d}x_3 -2 \int\limits_{S}(\varphi(1-c)+c) b_{1} \hat{x}_2~ \mathrm{d}x_2\mathrm{d}x_3\Bigg), \\
    \frac{\delta D_{x_3}}{\delta \varphi}[v]&=(1-c)\Bigg(\int\limits_{S} \hat{x}_2^{2}v ~ \mathrm{d}x_2\mathrm{d}x_3 -2 \int\limits_{S}(\varphi(1-c)+c) b_{2} \hat{x}_3~ \mathrm{d}x_2\mathrm{d}x_3\Bigg), \\
    \frac{\delta D_{x_2x_3}}{\delta \varphi}[v]&=(1-c)\Bigg(\int\limits_{S} \hat{x}_3 \hat{x}_2v ~ \mathrm{d}x_2\mathrm{d}x_3\\
    &\quad-\int\limits_{S}(\varphi(1-c)+c)( b_{1} \hat{x}_2+b_{2} \hat{x}_2) \mathrm{d}x_2\mathrm{d}x_3\Bigg),
\end{split}
\end{equation*}
where 
\[
b_{1}=\frac{1}{m}\int x_2 v ~ \mathrm{d}x_2\mathrm{d}x_3\quad\text{and}\quad b_{2}=\frac{1}{m}\int x_3 v ~ \mathrm{d}x_2\mathrm{d}x_2.
\]
Thus, we get  
\begin{align*}
\frac{\delta D_\mathrm{mean}}{\delta \varphi}[v]= \frac{\mu_{\rm norm}(3\lambda_{\rm norm}+ 2\mu_{\rm norm})}{\lambda_{\rm norm}+\mu_{\rm norm}}\left(\frac{\frac{\delta D_{x_2}}{\delta \varphi}[v]+\frac{\delta D_{x_3}}{\delta \varphi}[v]}{2}\right)
\end{align*} 
and 
\begin{align*}
    \frac{\delta RM}{\delta \varphi}[v]&=\frac{\mu_{\rm norm}(3\lambda_{\rm norm}+ 2\mu_{\rm norm})}{\lambda_{\rm norm}+\mu_{\rm norm}}\times\\
    &\quad\frac{(D_{x_2}-D_{x_3}) \left(\frac{\delta D_{x_2}}{\delta \varphi}[v]-\frac{\delta D_{x_3}}{\delta \varphi}[v]\right)+ 4D_{x_2x_3} \frac{\delta D_{x_2x_3}}{\delta \varphi}[v]}{2\sqrt{(D_{x_2}-D_{x_3})^{2}+4D_{x_2x_3}^{2}}}.    
\end{align*}
Regarding the regularization near the root $RM(\varphi)=0$, we introduce a parameter $0<\theta_{1}\ll 1$, and approximate $\frac{\delta RM} {\delta \varphi}[v]$ as
\[
\frac{\delta RM_{\theta_1}} {\delta \varphi}[v]= \frac{(D_{x_2}-D_{x_3})\left(\frac{\delta D_{x_2}}{\delta \varphi}[v]-\frac{\delta D_{x_3}}{\delta \varphi}[v]\right)+ 4D_{x_2x_3} \frac{\delta D_{x_2x_3}}{\delta \varphi}[v]}{2\sqrt{(D_{x_2}-D_{x_3})^{2}+4D_{x_2x_3}^{2}+\theta_{1}}}.
\]

For what concerns instead the first variation of $D_T$, we consider its distributional formulation~\eqref{eq:prandtl_phi} which, for simply connected cross-section $S$, reduces to 
\begin{equation}\label{eq:xn11.5}
    \int\limits_{S}\frac{1}{\mu_{\rm norm}(\varphi(1-c)+c)}\nabla \Phi \cdot \nabla v ~ \mathrm{d}x_2\mathrm{d}x_3 = 2\int\limits_{S}v ~ \mathrm{d}x_2\mathrm{d}x_3
\end{equation}
for all test functions $v\in W^{1,2}_0(S)$ and $\Phi=0$ on $\partial S$. 

 Since the variation of $D_T(\varphi)$ depends on the Prandtl's stress function $\Phi$ associated to $\varphi$ and given in equation~\eqref{eq:xn11.5},we apply a Lagrangian approach (see e.g. in~\cite{Hinze2009OptimizationWP}), by introducing the adjoint variable $p:S \rightarrow \mathbb{R}$.

We can then formulate the Lagrangian as
\begin{align*}
    L(\varphi,\Phi,p)&= E_{\epsilon}(\varphi)+ 2\int\limits_{S}\Phi ~ \mathrm{d}x_2\mathrm{d}x_3\\
    &\quad- \int\limits_{S}\frac{1}{\mu_{\rm norm}(\varphi(1-c)+c)}\nabla \Phi \cdot \nabla p ~ \mathrm{d}x_2\mathrm{d}x_3+2\int\limits_{S}p ~ \mathrm{d}x_2\mathrm{d}x_3.
\end{align*}
Looking for stationary states $(\varphi,\Phi,p)$ of $L$, we find that if the first variation for $(\varphi,\Phi,p)$ vanishes, both $\Phi$ and $p$ solve equation~\eqref{eq:xn11.5}.

 Moreover, since equation~\eqref{eq:xn11.5} is uniquely solvable, we can conclude that $\Phi=p$ and that
\begin{align*}
\frac{\delta L}{\delta \varphi}[v]=\frac{\delta E_{\epsilon}}{\delta \varphi}[v]+\int\limits_{S}\frac{1-c}{\mu_{\rm norm}(\varphi(1-c)+c)^{2}} \left(\nabla \Phi \cdot \nabla \phi\right) v ~ \mathrm{d}x_2\mathrm{d}x_3.
\end{align*}

This finally leads to the $L^2$-gradient flow 
\begin{align*}
    \epsilon(\partial_{t}\varphi,v)_{L^{2}(S)}= -\Bigg(\sigma_1\frac{\delta D_\mathrm{min}}{\delta \varphi}[v] + \sigma_2\frac{\delta D_\mathrm{max}}{\delta \varphi}[v] + \sigma_3\frac{\delta D_T}{\delta \varphi}[v] + \gamma\frac{\delta E_{\epsilon}}{\delta \varphi}[v] \Bigg),
\end{align*}
with 
\[
\frac{\delta D_T}{\delta \varphi}[v] = \int\limits_{S}\frac{1-c}{\mu_{\rm norm}(\varphi(1-c)+c)^{2}}\big(\nabla \Phi(\varphi) \cdot \nabla \Phi(\varphi)\big)v ~ \mathrm{d}x_2\mathrm{d}x_3
\]
and where $\Phi$ is the unique solution of~\eqref{eq:xn11.5}.


\subsection{P1-finite element approximation}\label{apx:fem}

In the following, let
\[
f(\varphi)=\frac{1}{2}(2\varphi^3-3\varphi^2+\varphi)
\]
denote the first derivative of the double well potential $F(\varphi)$ in~\eqref{double_well}, with respect to phase field variable $\varphi$.

In order to solve equation~\eqref{eq:x2.1} by the finite element method, we choose the discrete subspace $\mathcal{S}^{1,0}(\mathcal{T}_{h}) \subset W^{1,2}_0(S)$, which is given by
\[
\mathcal{S}^{1,0}(\mathcal{T}_{h})\coloneqq \{ v_{h}\in C^{0}(\bar{S}): (v_{h})_{|T}\in P^{1}(T),v_{h}=0\text{ on }\partial S \text{ for all }T \in\mathcal{T}_{h}\}.
\]
Furthermore, we choose a finite difference quotient to discretize the time derivative.

By using an explicit treatment of the appearing nonlinear terms in equation~\eqref{eq:x2.1}, and an implicit treatment of the Laplacian, we obtain the semi-implicit time stepping 
\begin{align*}
    &\int\limits_{S}\epsilon \frac{\mathcal{I}_{1,h}(\varphi^{n+1}-\varphi^{n})}{\tau} \phi_{j} ~ \mathrm{d}x_2\mathrm{d}x_3 \\
    &= -\int\limits_{S}\epsilon\gamma \nabla \mathcal{I}_{1,h}(\varphi^{n+1}) \cdot \nabla \phi_{j} + \frac{\gamma}{\epsilon} f(\mathcal{I}_{1,h}(\varphi^{n})) \phi_{j}~ \mathrm{d}x_2\mathrm{d}x_3\\ 
    &\quad- \sigma_{1}\frac{\delta D_{\mathrm{mean}}(\mathcal{I}_{1,h}(\varphi^{n}))}{\delta \phi_{j}} - \sigma_{2}\frac{\delta RM(\mathcal{I}_{1,h}(\varphi^{n}))}{\delta \phi_{j}} \\
    &\quad-\frac{\sigma_{3}}{\mu_{\rm norm}} \int\limits_{S}\frac{1-c}{(\mathcal{I}_{1,h}(\varphi^{n})(1-c)+c)^{2}} \nabla \Phi(\mathcal{I}_{1,h}(\varphi^{n})) \cdot \nabla \Phi(\mathcal{I}_{1,h}(\varphi^{n})) \phi_{j}\mathrm{d}x_2\mathrm{d}x_3, 
\end{align*}
with inner nodal basis points $x_{i}\in \mathcal{N}_{h}$ and $j =1,..., |\mathcal{N}_{h}|$.

 The nodal interpolant $\mathcal{I}_{1,h}(v)$ of a function $v\in C^{0}(\bar{S})$ is given by  
\[
\mathcal{I}_{1,h}v= \sum\limits_{j \in \mathcal{N}_{h}}\phi_{j}v(x_{j}),
\]
and the mass constraint is imposed by incorporating the additional condition
\[
\int\limits_{S}\mathcal{I}_{1,h}(\varphi^{n})~ \mathrm{d}x\mathrm{d}y-m=0.
\]
In every time step $t$ we thus solve the problem  
\begin{align}\label{eq:matrixsystem2}
\begin{pmatrix}
\hat{\mathcal{S}} & B\\
B^{T}& 0 
\end{pmatrix}\cdot U^{n+1} = \tilde{F},
\end{align}
where $\hat{\mathcal{S}}=M+\tau \gamma K \in \mathbb{R}^{n\times n}$ and $B=L$.
Here, the mass matrix $M$, the stiffness matrix $K$ and the diagonal lumped mass matrix $L \in \mathbb{R}^{n\times n}$ are, respectively, given by 
\begin{align*}
M_{ij} =\int\limits_{S} \phi_{j} \phi_{i} ~ \mathrm{d}x_2\mathrm{d}x_3, \ \   K_{ij}=\int\limits_{S}\nabla \phi_{j}\cdot \nabla \phi_{i} ~ \mathrm{d}x_2\mathrm{d}x_3, \ \ L_{ii}=\int\limits_{S}\phi_{i} ~ \mathrm{d}x_2\mathrm{d}x_3. 
\end{align*}
The right hand side $\tilde{F}$ in \eqref{eq:matrixsystem2} is determined by 
\begin{align*}
    \tilde{F}_{j}&=\frac{\tau}{\epsilon}F_{j}+(M\cdot U^{n})_{j}\quad\text{for }j=1,...,N,\\
    \tilde{F}_{N+1}&=m,
\end{align*}
where
\begin{align*}
    F_{j}\!&=\frac{\gamma}{\epsilon} \int\limits_{S}f(\mathcal{I}_{1,h}(\varphi^{n})) \phi_{j}~ \mathrm{d}x_2\mathrm{d}x_3\\
    &\!\!\quad - \sigma_{1}\frac{\delta D_{\mathrm{mean}}(\mathcal{I}_{1,h}(\varphi^{n}))}{\delta \varphi^{n}}[\phi_{j}] -\sigma_{2}\frac{\delta RM(\mathcal{I}_{1,h}(\varphi^{n}))}{\delta \varphi^{n}}[\phi_{j}] \\
    &\!\!\quad-\!\frac{\sigma_{3}}{\mu_{\rm norm}} \!\int\limits_{S}\!\frac{1-c}{(\mathcal{I}_{1,h}(\varphi^{n})(1-c)+c)^{2}} \nabla \Phi(\mathcal{I}_{1,h}(\varphi^{n})) \cdot \nabla \Phi(\mathcal{I}_{1,h}(\varphi^{n})) \phi_{j}\mathrm{d}x_2\mathrm{d}x_3.
\end{align*}

To achieve an energy stability in the way that for all $n\geq 1$ we have
\begin{align*}
    (\gamma E_{\epsilon}+\sigma_{1}D_{\mathrm{mean}}+\sigma_{2}RM+\sigma_{3}D_T)(\varphi^{n}) \leq\\
    (\gamma E_{\epsilon}+\sigma_{1}D_{\mathrm{mean}}+\sigma_{2}RM+\sigma_{3}D_T)(\varphi^{n-1}),
\end{align*}
it is necessary to demand $\tau \in \mathcal{O}(\epsilon^{3})$. In the following numerical simulations we set $\epsilon = 0.003$.


\subsection{Numerical experiments}

In this last part of the paper, inspired by multi-material composites found in the morphology of plant stems~\cite{rowe2004diversity,speck2020peak,wolff2021influence}, we study the optimal distribution of two materials inside a cross-section $S$, in the case in which the ratio between the Lam\'{e} parameters of the two materials is of order $\mathcal{O}(10^{-1})$.

In the following, this ratio is modeled within the density function $u^\varphi$, by using the phase field variable $\varphi$ and setting  
\[
u^\varphi(x_2,x_3)=\varphi(x_2,x_3)(1-c)+c
\]
with $c=0.1$, while the Lam\'{e} parameters $\mu_{\rm norm}$ and $\lambda_{\rm norm}$ are set to
\begin{align*}
    \begin{cases}
        \mu_{\rm norm}=26,\\
        \lambda_{\rm norm}=70.57.
    \end{cases}
\end{align*}

We consider different values of the weighting factors $\sigma_1,\sigma_2,\sigma_3$ and $\gamma$, that correspond to different terms of optimization, i.e. maximization or minimization of the torsional and bending rigidities.
The results of the numerical simulations are represented in Figs.~\ref{fig:trend},~\ref{fig:num_sim} and in Tab.~\ref{table:1}.

The reference cross-section $S$ is chosen as a circle of radius $r=0.7$ and the initial condition $\varphi^{0}$ corresponds to a circle of radius $r=0.5$.

 The discretization of $S$ is made up of approximately $1.6 \cdot 10^5$ P1 triangle elements.
The choice to start with a circular initial condition is justified by observations in the morphology of plant stems, where circular domains are often observed in young ontogenetic states, see for instance~\cite{rowe2004diversity,wolff2019twist}.

Since our goal is to optimize the overall bending rigidity of a rod with circular reference cross-section, in what follows we fix $\sigma_2=0$ and consider only the torsional rigidity $D_{T}$ \eqref{Dt} and the mean bending rigidity $D_{\rm mean}$ \eqref{Dmean}.
Depending on the weighting factors, we observe different stationary states of the gradient flow: in the case of a maximization of rigidities ($\sigma_1=-1$, $\sigma_3=-3$) and a sole minimization of torsional rigidity ($\sigma_3=3$), we obtain a symmetric circular tube and an I-beam like structure, see (a) and (b) in Fig.~\ref{fig:num_sim}.
These are well-known rigidity optimizers in the case of homogeneous rods (see, for instance,~\cite{kim2000topology}).

From the point of view of plant stem morphology, as noted in ~\cite{niklas1992plant,vogel2007living} and emphasised in the Introduction, plants are not inclined by a maximization of rigidity but more by an optimization of both strength and flexibility.
In general one observes a high twist-to-bend ratio
\[
\frac{D_{\rm mean}(u^\varphi)}{D_{T}(u^\varphi)},
\]
that implies high bending rigidity on the one hand and a comparatively low torsional rigidity on the other hand.

In our model, this can be achieved by a maximization of the bending rigidity and a minimization of the torsional rigidity ($\sigma_1=-3$, $\sigma_3=3$), and leads to a reinforcement by \textit{fibre strands}, which are formed by the stiffer material and are uniformly distributed along the boundary of $S$, see Fig.~\ref{fig:num_sim} (d), (e).

For a smaller weighting factor of the perimeter $\gamma$ a higher number of fibre strands occurs, see (e) in Fig.~\ref{fig:num_sim}.
This leads to a slight increase of the twist-to-bend ratio, which is mainly driven by a decrease of the torsional rigidity, see (c) in Fig.~\ref{fig:trend} and Tab.~\ref{table:1}.

Finally, we consider the minimization of both the bending an the torsional rigidities ($\sigma_1=1.5$, $\sigma_3=3$), that treats the case of achieving maximum flexibility.
This case is of particular interest for the study of vines, such as the liana in Fig.~\ref{fig:condylo}, with a trailing or shaking growth habit, where flexibility in both bending and torsion is crucial.

 Minimizing both the rigidities leads to structures with deep-grooves, see (f) and (g) in Fig~\ref{fig:num_sim}.
As in the experiments (d) and (e), a lower weighting of the perimeter functional has an influence on the shape of the minimizers.
In this case, a lower weighting of the perimeter term causes the stiffer material to form a deep groove shape with additional branched fingers, see (g) in Fig.~\ref{fig:num_sim}.


 The results in Fig.~\ref{fig:num_sim} (d) and (e) can be compared to the arrangement of fibre strands in several plant stems.
For instance, \textit{fibre-reinforced} structures, as in (d) and (e), are found in the morphology of \textit{Carex Pendula} and \textit{Caladium Bicolor}, where reinforcing by sclerenchymatous and collenchymatous stiffening tissues, respectively, results in a particularly high twist-to-bend ratio (see, for instance,~\cite{speck2020peak,wolff2022charting,wolff2021influence}).

On the other hand, the result in Fig.~\ref{fig:num_sim} (f) can be compared to the arrangement of a non-dense flexible secondary xylem with wide-diameter vessels and broad wood rays, as well as a flexible cortex during the ontogeny of a liana plant of the species \textit{Condylocarpon Guianense}, see Fig.~\ref{fig:condylo}.
During its ontogeny, this plant is inclined  to increasing its flexibility by a rearrangement of the main load bearing element, the secondary xylem. 
This is achieved by decreasing the bending and torsional rigidities of its plant stem by forming a structure with deep grooves and branched fingers.
For a detailed description of the evolution of the secondary xylem during the ontogeny of \textit{Condylocarpon Guianense} we refer the interested reader to~\cite{rowe2004diversity,wolff2019twist}. In particular, the transition from juvenile self-supporting liana stems with circular material arrangements to the non self-supporting adult plants with deep groves in the tissue arrangement is striking.

\begin{figure}[H]
     \centering
     \subfigure[$\sigma_1=-1,\sigma_3=-3$.]{\includegraphics[width=0.46\textwidth]{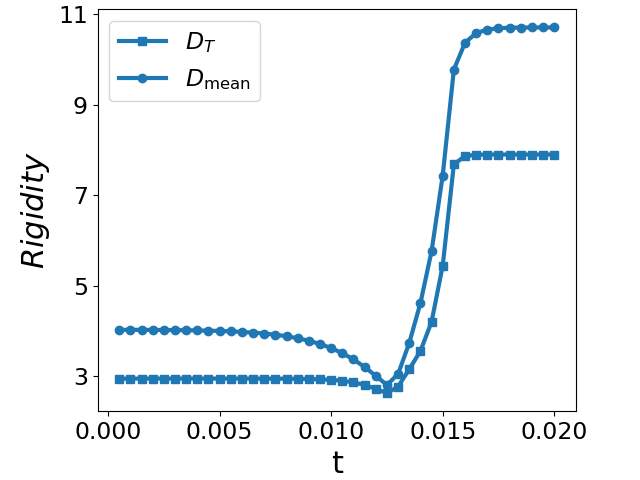}}\hspace{2em}
      \subfigure[$\sigma_1=0,\sigma_3=3$.]{\includegraphics[width=0.46\textwidth]{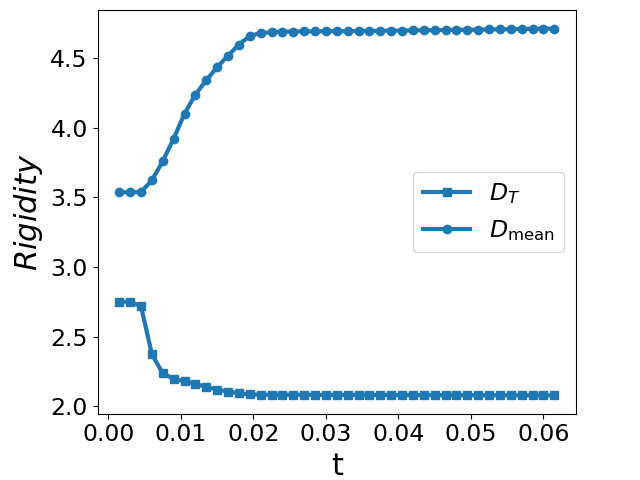}}\\
       \subfigure[$\sigma_1=-3,\sigma_3=3$.]{\includegraphics[width=0.46\textwidth]{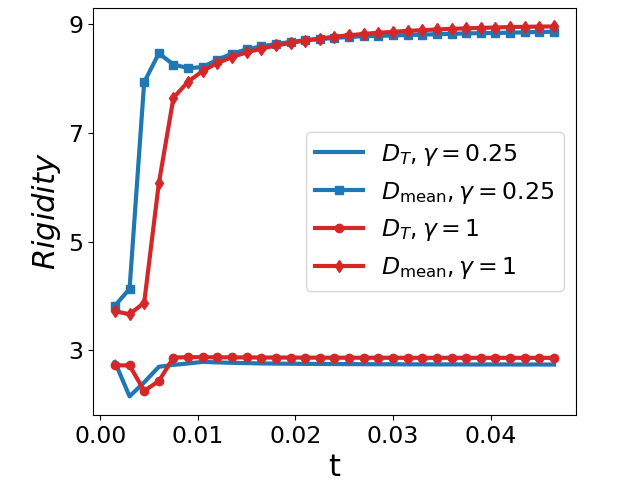}}\hspace{2em}
      \subfigure[$\sigma_1=1.5,\sigma_3=3$.]{\includegraphics[width=0.46\textwidth]{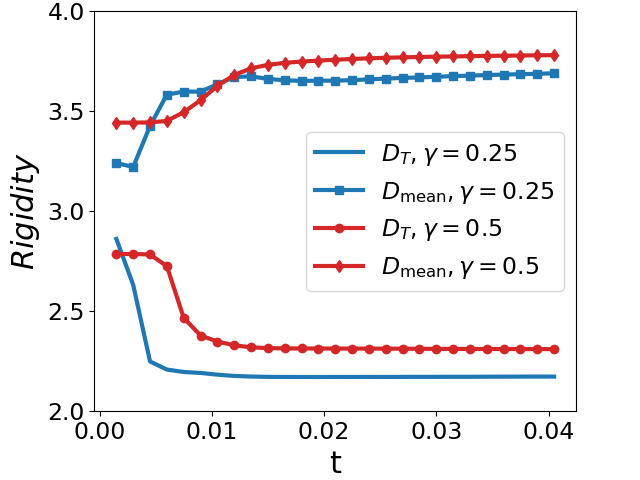}}\\
     \caption{Convergence history of the rigidities $D_{T}$ and $D_{\rm mean}$ with respect to pseudo-time $t$ during a gradient descent.}
     \label{fig:trend}
\end{figure}

\begin{table}[h!]
\captionsetup{font=footnotesize,labelfont=footnotesize}
\centering
\resizebox{0.75\textwidth}
{!}
{\begin{tabular}{|c | c | c |c|}
\hline
Experiment & $D_{\rm mean}(u)$ & $D_{T}(u)$ & $D_{\rm mean}(u)/D_{T}(u)$    \\ [1ex]
\hline
(a) & $3.54$ & $2.75$ & $1.29$ \\ [1ex]
\hline
(b) & $10.71$ & $7.9$ & $1.35$   \\ [1ex]
\hline
(c) & $4.71$ & $2.08$ & $2.26$ \\ [1ex] 
\hline
(d) & $8.96$ & $2.86$ & $3.13$   \\ [1ex]
\hline
(e) & $8.88$ & $2.74$ & $3.24$  \\ [1ex]
\hline
(f) & $3.79$ & $2.31$ & $1.64$   \\ [1ex]
\hline
(g) & $3.69$ & $2.17$ & $1.7$  \\ [1ex]
\hline
\end{tabular}}
\caption{Values of torsional and bending rigidities and the twist-to-bend ratio for stationary states in experiments (b)-(g) and initial condition (a).}
\label{table:1}
\end{table}

\begin{figure}[H]
\centering
\subfigure[Initial condition] {\includegraphics[width=0.26\textwidth]{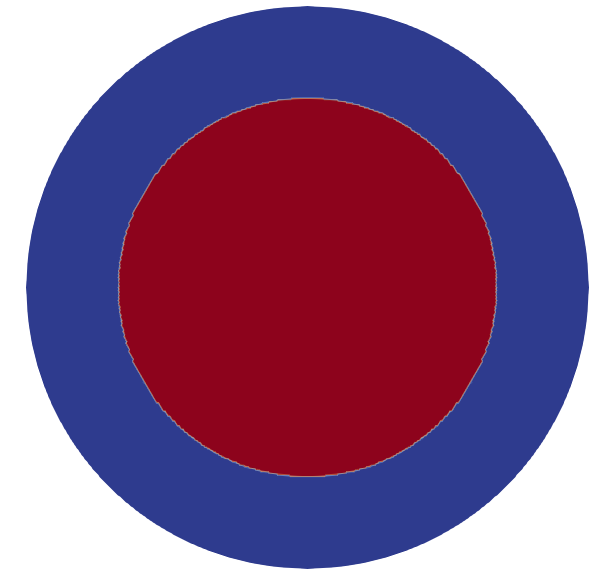}}\\
\subfigure[$\sigma_1=-1,\sigma_3=-3$.] {\includegraphics[width=0.26\textwidth]{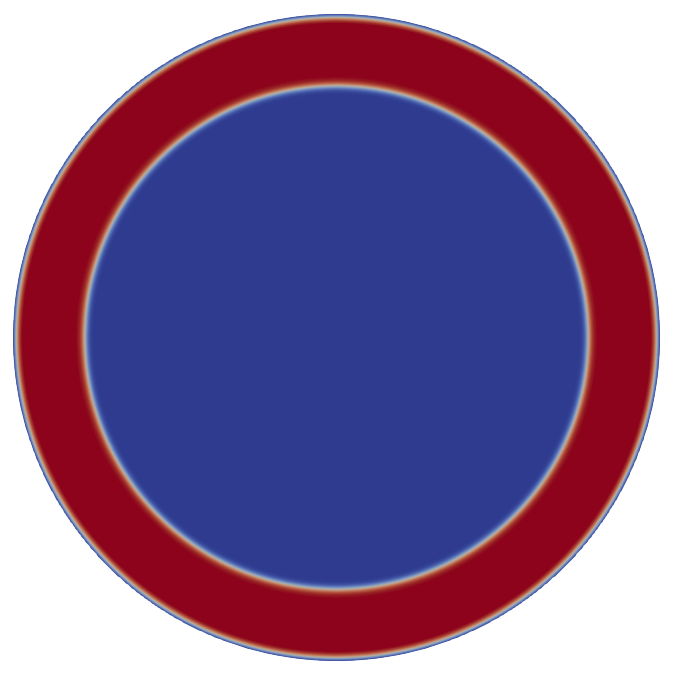}} \hspace{5em}
\subfigure[$\sigma_1=0,\sigma_3=3$.] {\includegraphics[width=0.26\textwidth]{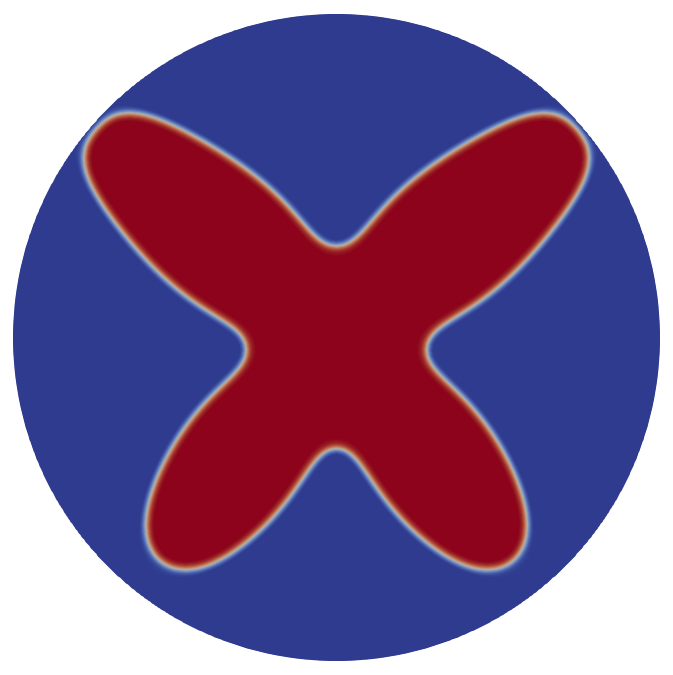}}\\
\subfigure[$\sigma_1=-3,\sigma_3=3$.] {\includegraphics[width=0.26\textwidth]{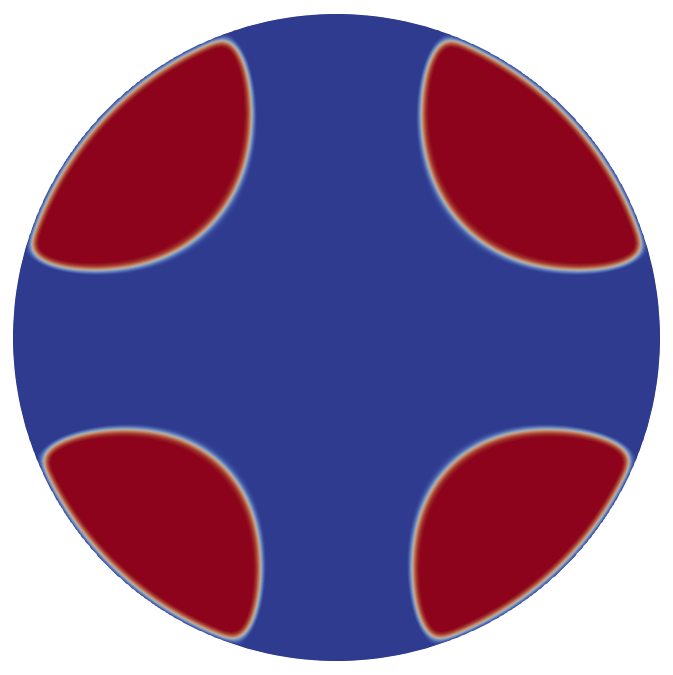}} \hspace{5em}
\subfigure[$\sigma_1=-3,\sigma_3=3$.] {\includegraphics[width=0.26\textwidth]{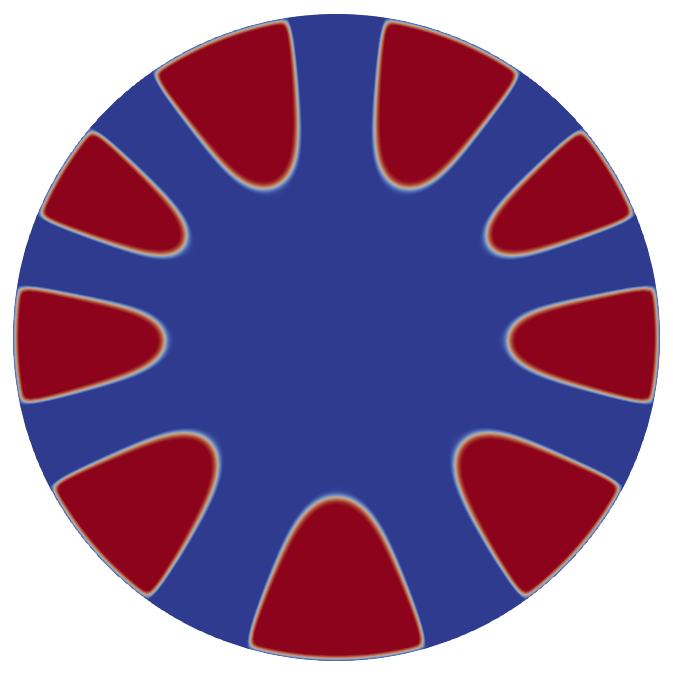}}\\
\subfigure[$\sigma_1=1.5,\sigma_3=3$.] {\includegraphics[width=0.26\textwidth]{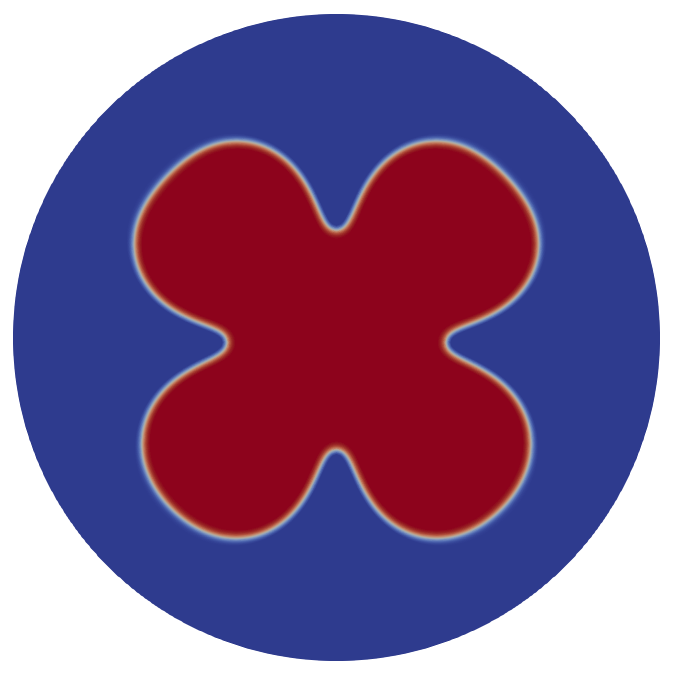}}\hspace{5em}
\subfigure[$\sigma_1=1.5,\sigma_3=3$.] {\includegraphics[width=0.26\textwidth]{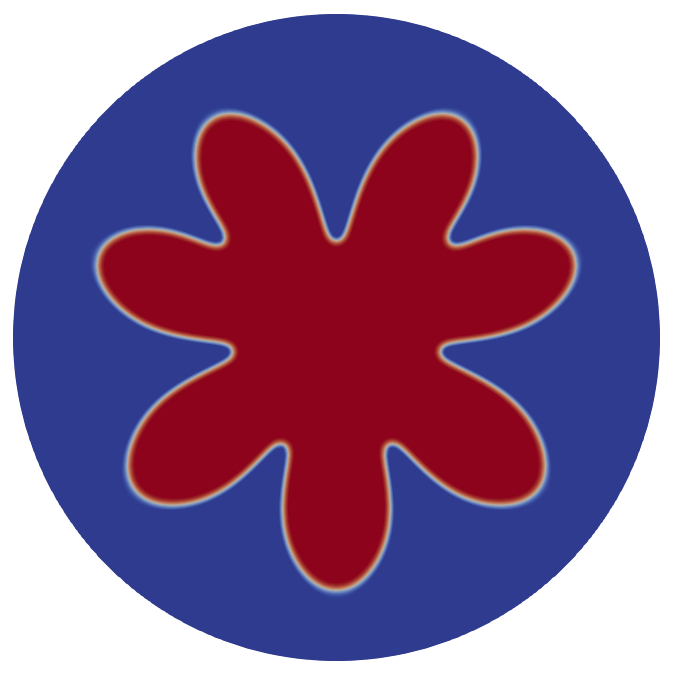}}
\caption[]{Local solutions of~\eqref{min:diff} for different weighting factors $\sigma_1,\sigma_3,\gamma$ corresponding to a maximization of rigidities in (b), a sole minimization of torsional rigidity in (c), a minimization of torsional and a maximization of bending rigidity in (d) and (e) and a minimization of both bending and torsional rigidity in (f) and (g). In experiments (b), (c) and (f) the weighting factor $\gamma$ for the perimeter penalization is set to $\gamma=0.5$, where in (e) and (g) the weighting factor is $\gamma=0.25$. In experiment (d) the weighting factor is set to $\gamma=1.0$. The stiffer material ($u=1$) is depicted in red where the softer material ($u=0.1$) is depicted in blue.}
\label{fig:num_sim}
\end{figure}


\section*{Conclusions}


Inspired by the bending-torsion theory of non-homogeneous elastic rods, we derived a model for the optimization of the bending and torsional rigidities.
It is done by studying a sharp interface shape optimization problem with perimeter penalization, that treats the torsional and bending rigidities as objectives.
We then adopted a diffuse interface approach, for which we proved the existence of solutions to the optimization problem.

 As a consequence of the $\Gamma$-convergence Theorem \ref{thm:gamma_conv}, we have showed that the sequence of minimizers of the diffuse interface approach converges to the minimizer of the sharp interface problem, with respect to the strong topology of $L^1(S)$, as the thickness of the interface tends to zero.

In the second part of the paper, we implemented a numerical method which approximates the solutions of the phase field problems, by using a steepest descent approach.
In particular, we studied four different cases of optimization: a maximization of both rigidities, a sole minimization of the torsional rigidity, a minimization of the torsional rigidity and a maximization of the bending rigidity, and, finally, the minimization of both rigidities.

The two latter cases were inspired by observations in plant morphology that showed that plants are more inclined to produce a high flexibility, especially in torsion, instead of a high stiffness of their stems. 
A numerical approximation of minimizers in these two cases resulted in characteristic shapes and distributions of two materials inside a circular cross-section $S$.

The appearing distributions of the materials exhibit a qualitative agreement with the tissue arrangements in the morphology of different plant stems. 
This study thus supports conjectures from biology stating that the optimization of certain mechanical properties, in particular the bending and torsional rigidities, are a driving force in the development of plant morphology \cite{Vogel1992,vogel2007living}.


\section*{Acknowledgments}


The authors would like to thank the livMatS Cluster of Excellence (Living, Adaptive and Energy-autonomous Materials Systems) and, in particular, the members Olga Speck and Thomas Speck, from the Plant Biomechanics Group Freiburg, for sharing their knowledge on Plant Biomechanics.~The authors would also like to thank Thomas Speck for providing the picture in Fig. \ref{fig:condylo}, Laura Melas for useful discussions on the topic and the anonymous referees for their careful reading.


\section*{Data Availability Statement}

The code used in this paper is available online in the Zenodo repository: \href{https://zenodo.org/records/10615223}{https://zenodo.org/records/10615223} DOI: 10.5281/zenodo.10615223 \cite{stevegithub}.


\bibliographystyle{abbrv} 
\bibliography{source}

\begin{thebibliography}{10}

\bibitem{acerbi1991variational}
E.~Acerbi, G.~Buttazzo, and D.~Percivale.
\newblock A variational definition of the strain energy for an elastic string.
\newblock {\em J. Elasticity}, 25(2):137--148, 1991.

\bibitem{allaire1993numerical}
G.~Allaire and G.~Francfort.
\newblock A numerical algorithm for topology and shape optimization.
\newblock In {\em Topology design of structures ({S}esimbra, 1992)}, volume 227
  of {\em NATO Adv. Sci. Inst. Ser. E: Appl. Sci.}, pages 239--248. Kluwer
  Acad. Publ., Dordrecht, 1993.

\bibitem{ambrosio2000functions}
L.~Ambrosio, N.~Fusco, and D.~Pallara.
\newblock {\em Functions of bounded variation and free discontinuity problems}.
\newblock Oxford Mathematical Monographs. The Clarendon Press, Oxford
  University Press, New York, 2000.

\bibitem{antman1973theory}
S.~S. Antman.
\newblock The theory of rods.
\newblock In C.~Truesdell, editor, {\em Linear Theories of Elasticity and
  Thermoelasticity: Linear and Nonlinear Theories of Rods, Plates, and Shells},
  pages 641--703. Springer, Berlin, Heidelberg, 1973.

\bibitem{antman2005problems}
S.~S. Antman.
\newblock {\em Nonlinear problems of elasticity}, volume 107 of {\em Applied
  Mathematical Sciences}.
\newblock Springer, New York, second edition, 2005.

\bibitem{crandall1978introduction}
R.~R. Archer, N.~H. Cook, S.~H. Crandall, N.~C. Dahl, T.~J. Lardner, F.~A.
  McClintock, E.~Rabinowicz, and G.~S. Reichenbach.
\newblock {\em An introduction to the mechanics of solids}.
\newblock Engineering Mechanics series. McGraw-Hill, second edition, 1978.

\bibitem{bendsoe1995optimization}
M.~P. Bends{\o}e.
\newblock {\em Optimization of structural topology, shape, and material}.
\newblock Springer-Verlag, Berlin, 1995.

\bibitem{blank2016sharp}
L.~Blank, H.~Garcke, C.~Hecht, and C.~Rupprecht.
\newblock Sharp interface limit for a phase field model in structural
  optimization.
\newblock {\em SIAM J. Control Optim.}, 54(3):1558--1584, 2016.

\bibitem{bourdin2003design}
B.~Bourdin and A.~Chambolle.
\newblock Design-dependent loads in topology optimization.
\newblock {\em ESAIM Control Optim. Calc. Var.}, 9:19--48, 2003.

\bibitem{dal1993introduction}
G.~Dal~Maso.
\newblock {\em An introduction to {$\Gamma$}-convergence}, volume~8 of {\em
  Progress in Nonlinear Differential Equations and their Applications}.
\newblock Birkh\"{a}user Boston, Inc., Boston, MA, 1993.

\bibitem{de1975tipo}
E.~De~Giorgi and T.~Franzoni.
\newblock Su un tipo di convergenza variazionale.
\newblock {\em Atti Accad. Naz. Lincei Rend. Cl. Sci. Fis. Mat. Nat. (8)},
  58(6):842--850, 1975.

\bibitem{ecsedi2005bounds}
I.~Ecsedi.
\newblock Bounds for the effective shear modulus.
\newblock {\em Engineering Transactions}, 53(4):415--423, 2005.

\bibitem{friesecke2002theorem}
G.~Friesecke, R.~D. James, and S.~M\"{u}ller.
\newblock A theorem on geometric rigidity and the derivation of nonlinear plate
  theory from three-dimensional elasticity.
\newblock {\em Comm. Pure Appl. Math.}, 55(11):1461--1506, 2002.

\bibitem{garcke2021phase}
H.~Garcke, P.~H\"{u}ttl, C.~Kahle, P.~Knopf, and T.~Laux.
\newblock Phase-field methods for spectral shape and topology optimization.
\newblock {\em ESAIM Control Optim. Calc. Var.}, 29:Paper No. 10, 57, 2023.

\bibitem{Hinze2009OptimizationWP}
M.~Hinze, R.~Pinnau, M.~Ulbrich, and S.~Ulbrich.
\newblock {\em Optimization with {PDE} constraints}, volume~23 of {\em
  Mathematical Modelling: Theory and Applications}.
\newblock Springer, New York, 2009.

\bibitem{huttl2022phase}
P.~H{\"u}ttl, P.~Knopf, and T.~Laux.
\newblock A phase-field version of the {F}aber--{K}rahn theorem, 2024.

\bibitem{kim2000topology}
Y.~Kim and T.~Kim.
\newblock Topology optimization of beam cross sections.
\newblock {\em Int. J. Solids Struct.}, 37(3):477--493, 2000.

\bibitem{kirchhoff1850gleichgewicht}
G.~Kirchhoff.
\newblock {\"U}ber das {G}leichgewicht und die {B}ewegung einer elastischen
  scheibe.
\newblock {\em J. Reine Angew. Math.}, 1850(40):51--88, 1850.

\bibitem{MR0820342}
R.~V. Kohn and G.~Strang.
\newblock Optimal design and relaxation of variational problems. {I}, {II},
  {III}.
\newblock {\em Comm. Pure Appl. Math.}, 39(1):113--137, 139--182, 353--377,
  1986.

\bibitem{lekhnitskii1971torsion}
S.~G. Lekhnitskii.
\newblock Torsion of anisotropic and nonhomogeneous beams.
\newblock {\em Izd. Nauka, Fiz-Mat. Literaturi, Moscow}, 1971.

\bibitem{michell1904lviii}
A.~G.~M. Michell.
\newblock {LVIII}. {T}he limits of economy of material in frame-structures.
\newblock {\em Philos. Mag. (6)}, 8(47):589--597, 1904.

\bibitem{modica1987gradient}
L.~Modica.
\newblock The gradient theory of phase transitions and the minimal interface
  criterion.
\newblock {\em Arch. Rational Mech. Anal.}, 98(2):123--142, 1987.

\bibitem{mora2002derivation}
M.~G. Mora and S.~M\"{u}ller.
\newblock Derivation of the nonlinear bending-torsion theory for inextensible
  rods by {$\Gamma$}-convergence.
\newblock {\em Calc. Var. Partial Differential Equations}, 18(3):287--305,
  2003.

\bibitem{neukamm2011rigorous}
S.~Neukamm.
\newblock Rigorous derivation of a homogenized bending-torsion theory for
  inextensible rods from three-dimensional elasticity.
\newblock {\em Arch. Ration. Mech. Anal.}, 206(2):645--706, 2012.

\bibitem{niklas1992plant}
K.~Niklas.
\newblock {\em Plant biomechanics: an engineering approach to plant form and
  function}.
\newblock University of Chicago press, 1992.

\bibitem{owen1990minimizers}
N.~C. Owen, J.~Rubinstein, and P.~Sternberg.
\newblock Minimizers and gradient flows for singularly perturbed bi-stable
  potentials with a {D}irichlet condition.
\newblock {\em Proc. Roy. Soc. London Ser. A}, 429(1877):505--532, 1990.

\bibitem{rowe2004diversity}
N.~Rowe, S.~Isnard, and T.~Speck.
\newblock Diversity of mechanical architectures in climbing plants: an
  evolutionary perspective.
\newblock {\em J. Plant Growth Regul.}, 23:108--128, 2004.

\bibitem{sadd2009elasticity}
M.~H. Sadd.
\newblock {\em Elasticity: theory, applications, and numerics}.
\newblock Academic Press, fourth edition, 2020.

\bibitem{speck2020peak}
O.~Speck, F.~Steinhart, and T.~Speck.
\newblock Peak values of twist-to-bend ratio in triangular flower stalks of
  \textit{{C}arex pendula}: a study on biomechanics and functional morphology.
\newblock {\em Am. J. Bot.}, 107(11):1588--1596, 2020.

\bibitem{sternberg1988effect}
P.~Sternberg.
\newblock The effect of a singular perturbation on nonconvex variational
  problems.
\newblock {\em Arch. Ration. Mech. Anal.}, 101(3):209--260, 1988.

\bibitem{takezawa2010shape}
A.~Takezawa, S.~Nishiwaki, and M.~Kitamura.
\newblock Shape and topology optimization based on the phase field method and
  sensitivity analysis.
\newblock {\em J. Comput. Phys.}, 229(7):2697--2718, 2010.

\bibitem{thomsen1992topology}
J.~Thomsen.
\newblock Topology optimization of structures composed of one or two materials.
\newblock {\em Structural optimization}, 5:108--115, 1992.

\bibitem{villaggio1997mathematical}
P.~Villaggio.
\newblock {\em Mathematical models for elastic structures}.
\newblock Cambridge University Press, Cambridge, 1997.

\bibitem{Vogel1992}
S.~Vogel.
\newblock Twist-to-bend ratios and cross-sectional shapes of petioles and
  stems.
\newblock {\em J. Exp. Bot.}, 43(11):1527--1532, 1992.

\bibitem{vogel2007living}
S.~Vogel.
\newblock Living in a physical world {XI}. {T}o twist or bend when stressed.
\newblock {\em J. Biosci.}, 32(4):643--655, 2007.

\bibitem{stevegithub}
S.~Wolff-Vorbeck.
\newblock {C}\texttt{++} code for "{M}ulti-material shape optimization for
  bending and torsion of rods".
\newblock https://zenodo.org/records/10615223, 2023.

\bibitem{wolffthesis23}
S.~Wolff-Vorbeck.
\newblock {\em Optimization and uncertainty quantification for geometric
  structures}.
\newblock PhD thesis, University of Freiburg, Germany, 2023.

\bibitem{wolff2019twist}
S.~Wolff-Vorbeck, M.~Langer, O.~Speck, T.~Speck, and P.~W. Dondl.
\newblock Twist-to-bend ratio: An important selective factor for many
  rod-shaped biological structures.
\newblock {\em Sci. Rep.}, 9:17182, 2019.

\bibitem{wolff2022charting}
S.~Wolff-Vorbeck, O.~Speck, M.~Langer, T.~Speck, and P.~W. Dondl.
\newblock Charting the twist-to-bend ratio of plant axes.
\newblock {\em J. R. Soc. Interface}, 19(191):20220131, 2022.

\bibitem{wolff2021influence}
S.~Wolff-Vorbeck, O.~Speck, T.~Speck, and P.~W. Dondl.
\newblock Influence of structural reinforcements on the twist-to-bend ratio of
  plant axes: a case study on \textit{{C}arex pendula}.
\newblock {\em Sci. Rep.}, 11:21232, 2021.

\end{thebibliography}
\end{document}